\documentclass{amsart}
\usepackage{amsmath, amscd, amssymb, amsthm}
\usepackage{bbm}
\usepackage{latexsym}
\usepackage{amsfonts}
\usepackage{graphicx}
\usepackage[all,cmtip]{xy}
\usepackage[colorlinks,linkcolor=blue,breaklinks=blue,urlcolor=blue,citecolor=blue,anchorcolor=blue,pagebackref]{hyperref}%
\setcounter{MaxMatrixCols}{30}
\usepackage{geometry}
\geometry{left=3.5cm,right=3.5cm,top=2.8cm,bottom=2.5cm}

\newtheorem{theorem}{Theorem}
\newtheorem{lemma}{Lemma}

\newtheorem{remark}{Remark}

\newtheorem{conjecture}{Conjecture}

\renewcommand*\backref[1]{}
\renewcommand*\backrefalt[4]{ \ifcase #1 \or (cited on page #2) \else (cited on pages #2) \fi}

\newcommand{\be}{\begin{equation}}
\newcommand{\ee}{\end{equation}}
\newcommand{\bea}{\begin{eqnarray}}
\newcommand{\eea}{\end{eqnarray}}

\newcommand{\vs}{\vspace{0.5cm}}

\def\XXint#1#2#3{{\setbox0=\hbox{$#1{#2#3}{\int}$ }
\vcenter{\hbox{$#2#3$ }}\kern-.6\wd0}}

\begin{document}

\title[On Hermitian manifolds with constant mixed curvature]{On Hermitian manifolds with constant mixed curvature}

\author{Shuwen Chen}
\address{Shuwen Chen. School of Mathematical Sciences, Chongqing Normal University, Chongqing 401331, China}
\email{{3153017458@qq.com}}\thanks{Chen is supported by Chongqing graduate student research grant No. CYB240227.  The corresponding author Zheng is partially supported by National Natural Science Foundations of China with the grant No.12141101 and 12471039,  Chongqing Normal University grant 24XLB026, and is supported by the 111 Project D21024.}

\author{Fangyang Zheng}
\address{Fangyang Zheng. School of Mathematical Sciences, Chongqing Normal University, Chongqing 401331, China}
\email{20190045@cqnu.edu.cn; franciszheng@yahoo.com} \thanks{}

\subjclass[2020]{53C55 (primary), 53C05 (secondary)}
\keywords{Hermitian manifold, Chern connection, holomorphic sectional curvature,  mixed curvature, Hermitian Lie algebras}

\begin{abstract}
In a recent work, Kai Tang conjectured that any compact Hermitian manifold with non-zero constant mixed curvature must be K\"ahler. He confirmed the conjecture in complex dimension $2$ and for Chern K\"ahler-like manifolds in general dimensions. In this paper, we  verify his conjecture for several special types of Hermitian manifolds, including complex nilmanifolds, solvmanifolds with complex commutators, almost abelian Lie groups, and Lie algebras containing a $J$-invariant abelian ideal of codimension $2$. We also verify the conjecture for all compact balanced threefolds when the Bismut connection has  parallel torsion. These results provide partial evidence towards the validity of Tang's conjecture.
\end{abstract}

\maketitle

\tableofcontents

\section{Introduction}\label{intro}

The simplest kind of Riemannian manifolds are the {\em space forms,} which are complete Riemannian manifolds with constant sectional curvature. They are quotients of the sphere \( S^n \), the Euclidean space \( \mathbb{R}^n \), or the hyperbolic space \( \mathbb{H}^n \),  equipped with (a scaling of) the standard metric. In the complex case, the sectional curvature of a Hermitian metric can no longer be constant unless the metric is flat. So instead one requires that the holomorphic sectional curvature be constant. A complete K\"ahler manifold with constant holomorphic sectional curvature is called a {\em complex space form}. Their universal covers are either the complex projective space \( \mathbb{C} \mathbb{P}^n \), or the complex Euclidean space \( \mathbb{C}^n \), or the  complex hyperbolic space \( \mathbb{C} \mathbb{H}^n \), equipped with (a scaling of) the standard metric.

Recall that for a given  Hermitian manifold $(M^{n},g)$, the {\em holomorphic sectional curvature} $H$ of the Chern connection is defined by
\begin{align}
H(X)=R(X,\overline{X},X,\overline{X})/|X|^{4} \nonumber \,\,,
\end{align}
where $X$ is any  non-zero type $(1, 0)$ tangent vector, and $R$ is the curvature tensor of the Chern connection. When the metric $g$ is non-K\"ahler, $H$ cannot determine the entire $R$ in general. So it is natural to wonder when can $H$ be constant. The following is a long-standing conjecture in Hermitian geometry:
\begin{conjecture}[{\bf Constant Holomorphic Sectional Curvature Conjecture}] \label{conj1}
Let $(M^{n},g)$ be a compact Hermitian manifold with $H$ equal to a constant $c$. If $c\neq0$, then $g$ is K\"{a}hler and if $c=0$, then $g$ is Chern flat (namely $R=0$).
\end{conjecture}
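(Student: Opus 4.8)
The plan is to convert the pointwise hypothesis into a curvature-tensor identity by polarization, and then to attempt to annihilate the Chern torsion (when $c\neq 0$) or the whole curvature (when $c=0$) using the Chern--Bianchi identities together with a global integration on the compact $M$. Fix a local unitary frame $\{e_1,\dots,e_n\}$ and write $R_{i\bar jk\bar l}=R(e_i,\overline{e_j},e_k,\overline{e_l})$; in the non-K\"ahler case these components obey only the single symmetry $R_{i\bar jk\bar l}=\overline{R_{j\bar il\bar k}}$. Since $R(X,\overline X,X,\overline X)=\sum R_{i\bar jk\bar l}\,x_i\overline{x_j}x_k\overline{x_l}$ is a Hermitian quartic in the coordinates of $X$, the hypothesis $H\equiv c$ is equivalent, after polarization, to
\[
R_{i\bar jk\bar l}+R_{k\bar ji\bar l}+R_{i\bar lk\bar j}+R_{k\bar li\bar j}=2c\,(g_{i\bar j}g_{k\bar l}+g_{i\bar l}g_{k\bar j}).
\]
This pins down only the part of $R$ that is symmetric under $i\leftrightarrow k$ and under $j\leftrightarrow l$; in the K\"ahler case the two symmetry defects vanish and the identity already forces $R$ to equal the complex-space-form model $\tfrac{c}{2}(g_{i\bar j}g_{k\bar l}+g_{i\bar l}g_{k\bar j})$. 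Hence the whole problem is to control those symmetry defects.

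The next step is to express the defects $R_{i\bar jk\bar l}-R_{k\bar ji\bar l}$ and $R_{i\bar jk\bar l}-R_{i\bar lk\bar j}$ through the Chern torsion $T$: the first and second Bianchi identities render them linear in the first covariant derivatives $\overline\nabla T$ and $\nabla\overline T$, while the various Ricci contractions additionally bring in quadratic terms $T\ast\overline T$. Substituting these into the polarized identity rewrites every curvature component as the model tensor plus explicit torsion terms. Tracing once and then again collapses the identity to the clean pointwise relation
\[
s_1+s_2=c\,n(n+1),\qquad s_1=\sum_{i,k}R_{i\bar ik\bar k},\quad s_2=\sum_{i,k}R_{i\bar kk\bar i}
\]
between the two Chern scalar curvatures. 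Now I would invoke the standard global identity on a compact Hermitian manifold expressing $\int_M(s_1-s_2)\,dV$ as a torsion integral up to an exact term. Combined with the pointwise relation above, this would determine both $\int_M s_1$ and $\int_M s_2$ in terms of $c\,\mathrm{Vol}(M)$ and $\int_M|T|^2$; the hope is that for $c\neq 0$ a sign/positivity consideration forces $\int_M|T|^2=0$, hence $T\equiv 0$, i.e. $g$ K\"ahler, which then makes $(M,g)$ a complex space form. The case $c=0$ would be run through the same bookkeeping, now aiming at $\int_M|R|^2=0$.

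The decisive difficulty, and the reason the statement is a conjecture rather than a theorem, lies in this last integration: because constant holomorphic sectional curvature controls only the \emph{symmetrized} curvature, the torsion enters the traced identities through terms whose signs are not controlled, and there is no apparent Bochner-type formula in which they recombine into a single definite integrand. To make the strategy actually close, I would specialize to situations in which $T$ and $R$ are parallel or left-invariant, so that all covariant-derivative terms drop out and the polarized identity together with the Bianchi relations becomes a finite-dimensional \emph{algebraic} system in the structure constants of a Hermitian Lie algebra; on nilmanifolds, solvmanifolds, almost abelian groups and the like, compactness is supplied by a lattice, and one can hope to show directly that no nontrivial torsion is compatible with the system. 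I expect the hardest technical point in that algebraic reduction to be ruling out the quadratic $T\ast\overline T$ configurations, which is precisely where the non-K\"ahler freedom hides.
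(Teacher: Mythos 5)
The statement you are addressing is Conjecture~\ref{conj1}, which the paper explicitly presents as a long-standing \emph{open problem}: it is known only in complex dimension $2$ (Balas--Gauduchon for $c\leq 0$, Apostolov--Davidov--Muskarov for $c>0$) and in a few special situations for $n\geq 3$. The paper contains no proof of it, only partial evidence for the more general mixed-curvature version (Conjecture~\ref{conj2}) on particular classes of manifolds. So there is no proof in the paper to compare against, and your proposal, by your own admission, does not constitute a proof either. Your setup is nevertheless correct and consistent with the paper's machinery: the polarized identity you write down is exactly the $\alpha=0$, $\beta=1$ case of equation (\ref{mix1}) in Lemma~\ref{lemma1}, i.e. $4\widehat{R}_{i\bar{j}k\bar{\ell}}=2c(g_{i\bar{j}}g_{k\bar{\ell}}+g_{i\bar{\ell}}g_{k\bar{j}})$, and your observation that constancy of $H$ controls only the symmetrized tensor $\widehat{R}$ is precisely the source of the difficulty.

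The genuine gap is the one you name yourself, and it is the whole content of the conjecture: after tracing, the relation between the two Chern scalar curvatures combined with the standard integral identity for $\int_M(s_1-s_2)$ does not yield an integrand of definite sign, so no Bochner-type argument closes, and for $n\geq 3$ there is no known way to control the antisymmetric defects $R_{i\bar{j}k\bar{\ell}}-R_{k\bar{j}i\bar{\ell}}$ in general. Your proposed fallback --- restricting to left-invariant structures so that the identity becomes an algebraic system in the structure constants of a Hermitian Lie algebra --- is exactly the route the paper takes in Theorems~\ref{thm}, \ref{thm1b}, \ref{thm2} and \ref{thm3}, but there the conclusion is extracted only case by case, using the explicit structure-constant formulas (\ref{torsion})--(\ref{RhatD}), unimodularity, Salamon frames, and the matrices $X$, $Y$, $Z$ of Section 5; none of this follows from the general scheme you sketch. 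In short, your outline correctly identifies both the standard reduction and the obstruction, but the statement remains a conjecture and your proposal does not prove it.
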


First of all, if one replaces the Chern connection by another metric connection, such as Levi-Civita, Bismut, or Gauduchon connections in the conjecture, then one gets related conjectures/questions. See \cite{CN, ChenZ, ChenZ1} for more discussions on this. Secondly, note that Boothby \cite{Boothby} classified compact Chern flat manifolds in 1958 as the set of all compact quotients of complex Lie groups, equipped with left-invariant metrics compatible with the complex structure. Thirdly, the compactness assumption in the conjecture is necessary, and there are counterexamples in the noncompact case.

When $n=2$, the conjecture is known to be true. The $c \leq 0$ case  was established by Balas-Gauduchon \cite{BG} in 1985, while the more difficult $c > 0$ case was solved by Apostolov-Davidov-Muskarov \cite{ADM} in 1996.

For $n\geq 3$ the conjecture remains an open problem, with only a few partial results known so far. For instance, in \cite{DGM}, Davidov-Grantcharov-Muskarov showed that the only twistor space with constant holomorphic sectional curvature is the complex space form \( \mathbb{C} \mathbb{P}^3 \). In \cite{Tang}, K.\,Tang confirmed the conjecture under the additional assumption that $g$ is Chern K\"ahler-like, meaning that $R$ obeys all K\"ahler symmetries. In \cite{CCN}, Chen-Chen-Nie proved the conjecture under the additional assumption that $g$ is locally conformally K\"ahler and $c\leq 0$. In \cite{ZhouZ}, Zhou-Zheng proved that any compact balanced threefold with zero {\em real bisectional curvature}, a notion introduced in \cite{XYangZ} which is slightly stronger than $H$, must be Chern flat. In \cite{LZ} and \cite{RZ}, Li-Zheng or Rao-Zheng confirmed the conjecture for complex nilmanifolds or compact Hermitian manifolds that are {\em Bismut K\"ahler-like,} meaning that the curvature $R^b$ of the Bismut connection $\nabla^b$ obeys all K\"ahler symmetries.

The notion of \emph{mixed curvature} $\mathcal{C}_{\alpha,\beta}$ was introduced by Chu-Lee-Tam \cite{CLT} as a convex combination of the (first) Ricci curvature and holomorphic sectional curvature of the Chern connection:
\begin{equation*}
\mathcal{C}_{\alpha,\beta}(X) = \alpha \operatorname{Ric}(X,\overline{X}) + \beta H(X),
\end{equation*}
where  $\alpha, \beta \in \mathbb{R}$, $X \in T^{1,0}M$ is a type $(1,0)$ tangent vector of unit length, and $\operatorname{Ric}$ is the first Ricci curvature of the Chern connection, defined by $\operatorname{Ric}(X,\overline{X}) = \sum_{i=1}^n R(X,\overline{X}, e_i, \overline{e}_i)$ where $e$ is any local unitary frame. Throughout  this paper, we will always assume that $(\alpha, \beta) \neq (0,0)$. 

In the above notation, we have $\mathcal{C}_{1,0}=\operatorname{Ric}$ and $\mathcal{C}_{0,1}=H$. When the metric $g$ is K\"{a}hler, the mixed curvature encompasses several interesting curvature conditions. For instance, $\mathcal{C}_{1,1}$ corresponds to the $Ric^{+}(X, \overline{X})$ introduced by Ni \cite{N2}, and $\mathcal{C}_{1,-1}$ corresponds to the {\em orthogonal Ricci curvature} $Ric^{\perp}(X, \overline{X})$ introduced by Ni-Zheng \cite{NZ1} (see also \cite{BT} for some generalizations). Additionally, if $k$ is an integer between $1$ and $n$, then $\mathcal{C}_{k-1,n-k}$ is closely related to the $k$-Ricci curvature $Ric_{k}$ introduced by Ni \cite{N1}.

For compact K\"{a}hler manifolds with $\mathcal{C}_{\alpha,\beta}\geq 0$ or $\mathcal{C}_{\alpha,\beta}\leq 0$, there have been important progress made in recent years. For example, Yang \cite{Yang2018} proved that a compact K\"ahler manifold with $\mathcal{C}_{0,1} > 0$ must be projective and rationally connected, confirming a conjecture by Yau \cite{Yau}. Ni \cite{N2} generalized this result, showing that the same holds if $Ric_k > 0$ for some $1 \leq k \leq n$. Matsumura \cite{Mat1,Mat2} established structure theorems for projective manifolds with $\mathcal{C}_{0,1} \geq 0$, while Chu-Lee-Tam \cite{CLT} and others proved that compact K\"ahler manifolds with $\mathcal{C}_{\alpha,\beta} > 0$, where $\alpha > 0$ and $3\alpha + 2\beta \geq 0$, must be projective and simply-connected. Zhang-Zhang \cite{ZZ} extended Yang's result to the quasi-positive case, confirming a conjecture by Yang. Tang \cite{Tang2} demonstrated the projectivity of compact K\"ahler manifolds with quasi-positive $\mathcal{C}_{\alpha,\beta}$. More recently, Chu-Lee-Zhu \cite{CLZ} established a structure theorem for compact K\"ahler manifolds with $\mathcal{C}_{\alpha,\beta} \geq 0$. In the non-positive case, Wu-Yau \cite{WuYau} confirmed a conjecture of Yau that projective K\"ahler manifolds with $\mathcal{C}_{0,1} < 0$ must have ample canonical line bundle. Tosatti-Yang \cite{TosattiYang} showed that the projectivity assumption can be removed in Wu-Yau's Theorem, and Chu-Lee-Tam \cite{CLT} proved that compact K\"ahler manifolds with $\mathcal{C}_{\alpha,\beta} < 0$ and $\alpha \geq 0$, $\beta \geq 0$, must have ample canonical bundle.

These recent developments on mixed curvature are mainly focused on K\"ahler manifolds, and much less is known about the non-K\"ahler case. In \cite{Tang25}, Tang proposed the following conjecture:

\begin{conjecture}[\bf{Tang}]\label{conj2} Let $(M^{n}, g)$ be a compact Hermitian manifold with $\mathcal{C}_{\alpha,\beta}= c$, where $c$ is a constant. If  $c\neq0$, then $g$ is K\"ahler. \end{conjecture}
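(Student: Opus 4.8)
The plan is to convert the pointwise curvature hypothesis into a symmetric tensor identity and then play it against the Bianchi identities of the Chern connection. First I would homogenize the assumption: writing $\mathcal{C}_{\alpha,\beta}\equiv c$ for an arbitrary (not necessarily unit) $X\in T^{1,0}M$ in a local unitary frame $\{e_i\}$, and multiplying the Ricci term by $|X|^2$ so that both sides have bidegree $(2,2)$, gives
\be
\alpha\,\operatorname{Ric}_{a\ol b}\,X^{a}\ol{X^{b}}\,|X|^{2} + \beta\,R_{a\ol b c\ol d}\,X^{a}\ol{X^{b}}X^{c}\ol{X^{d}} = c\,|X|^{4},
\ee
with summation convention, $\operatorname{Ric}_{a\ol b}=\sum_i R_{a\ol b i\ol i}$ the first Chern--Ricci tensor, and $|X|^2=g_{c\ol d}X^c\ol{X^d}$. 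Since both sides are bihomogeneous in $(X,\ol X)$, polarizing in the two holomorphic and the two antiholomorphic arguments yields a tensor identity of the schematic form
\be
\alpha\,\mathrm{Sym}\big(\operatorname{Ric}_{a\ol b}\,g_{c\ol d}\big)+\beta\,\mathrm{Sym}\big(R_{a\ol b c\ol d}\big)=c\,\mathrm{Sym}\big(g_{a\ol b}\,g_{c\ol d}\big),
\ee
where $\mathrm{Sym}$ symmetrizes over $\{a,c\}$ and over $\{b,d\}$. This records only the \emph{symmetric} part of the Chern curvature.

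The second step is to recover the rest. The Kähler condition is the vanishing of the Chern torsion $T$, and $T$ is exactly what governs the antisymmetric parts $R_{a\ol b c\ol d}-R_{c\ol b a\ol d}$ of the curvature through the first Bianchi identity of the Chern connection, which gives a commutation formula of the form
\be
R_{a\ol b c\ol d}-R_{c\ol b a\ol d} \;=\; \nabla\,\ol T \qquad(\text{schematically}).
\ee
Feeding the polarized identity into this commutation formula and tracing appropriately, I would aim to isolate the non-Kähler contribution and arrive at a first-order differential relation on the torsion alone, of the form $\mathcal{L}(T)=c\cdot(\text{lower order in }T)$, where $\mathcal{L}$ is a natural operator built from $g$ and $T$; the two Chern--Ricci traces (the first and the Gauduchon Ricci) and the standard torsion identities would do the bookkeeping.

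The third step is to use compactness. The natural route is a Bochner/integration-by-parts argument: contract the relation from the previous step suitably, integrate against the volume form, use $\int_M \nabla(\cdots)=0$ to annihilate the derivative terms, and try to reach an integral inequality in which one side is a definite quadratic in $T$ and the other is a multiple of $c\int_M|T|^2$. If the sign is favorable, then $c\neq 0$ together with compactness forces $\int_M|T|^2\le 0$, hence $T\equiv 0$ and $g$ is Kähler.

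The main obstacle is precisely the sign in the Bochner step. In the Chern Kähler-like case treated by Tang the antisymmetric parts vanish outright, so the curvature is algebraically rigid; likewise on the homogeneous models (complex nilmanifolds, solvmanifolds with complex commutators, almost abelian groups, and algebras with a $J$-invariant abelian ideal of codimension $2$) one can diagonalize the structure constants and solve the resulting polynomial system by hand. In full generality, however, the cross terms between $\alpha\operatorname{Ric}$ and $\beta H$, together with the $\nabla T$ contributions and the torsion of the background Gauduchon connection, do not obviously assemble into a sign-definite integrand, and this failure must persist uniformly over all $(\alpha,\beta)\neq(0,0)$. I expect that establishing this definiteness is the crux and that it is exactly what keeps the full conjecture open, which is why one instead confines attention to the rigid classes above, where the algebra of the second and third steps closes.
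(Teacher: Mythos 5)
There is a genuine gap here, and in fact you concede it yourself in your final paragraph: your argument does not close. The statement you were asked to prove is precisely Tang's conjecture, which is \emph{open}; the paper does not prove it either, and only verifies it for special classes (complex nilmanifolds, solvmanifolds with $J$-invariant commutator, almost abelian and codimension-$2$ abelian-ideal Lie algebras, and BTP manifolds, balanced in dimension $3$ or non-balanced in general). Your first step --- homogenizing and polarizing the constancy condition into a symmetrized tensor identity --- is sound and is exactly the content of Lemma \ref{lemma1} of the paper (the identity (\ref{mix1}) involving $\widehat{R}$ and the Ricci terms). But from that point on your plan is only schematic: the ``commutation formula $R_{a\bar b c\bar d}-R_{c\bar b a\bar d}=\nabla\bar T$,'' the operator $\mathcal{L}$, and the hoped-for sign-definite integrand are never exhibited, and you explicitly state that you do not know how to establish the definiteness. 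A proof proposal whose crucial step is ``if the sign is favorable'' is not a proof; as written it establishes nothing beyond the polarization identity.

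For comparison, the paper sidesteps the global Bochner strategy entirely. In the Lie-algebra cases it writes the Chern curvature explicitly in terms of structure constants under an adapted unitary frame (formulas (\ref{HD})--(\ref{RhatD})), feeds these into the polarized identity, and solves the resulting finite system of algebraic equations, using unimodularity to force $c=0$ and then the vanishing of the remaining structure constants. In the BTP cases it uses the classification of balanced BTP threefolds and the parallel-torsion structure to reduce again to pointwise algebra. In every case the conclusion is reached by linear algebra on a rigid model, not by an integral estimate --- which is consistent with your own diagnosis that the analytic route does not obviously produce a sign-definite quantity uniformly in $(\alpha,\beta)$. If you want to salvage something from your approach, you would need to either (a) restrict to one of these rigid classes and carry out the algebra, or (b) actually construct the operator $\mathcal{L}$ and prove the required positivity, which would be a substantial new result rather than a routine verification.
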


Note that in the conjecture one may assume that $\beta \neq 0$. The $\beta =0$ case corresponds to (first) Chern Ricci curvature, where the metric is automatically K\"ahler when ${\mathcal C}_{1,0}=c \neq 0$, as in this case the K\"ahler form is a constant multiple of the  Chern Ricci form, which is always closed. 

For the $c=0$ case,  the vanishing of the mixed curvature $\mathcal{C}_{\alpha,\beta}$ in general  does not imply $R = 0$, as Tang have already demonstrated in \cite{Tang25} on isosceles Hopf manifolds.

In complex dimension $n=2$, Tang \cite{Tang25} proved that any compact Hermitian surface with constant mixed curvature must be either K\"ahler or an isosceles Hopf surface. For $n \geq 3$, Tang also obtained results for locally conformally K\"ahler manifolds and for Chern K\"ahler-like manifolds.

Recall that a \emph{Lie-Hermitian manifold} is a compact quotient $M = G/\Gamma$ of a Lie group $G$ by a discrete subgroup $\Gamma \subseteq G$, where the complex structure and metric on $G$ are left-invariant. If $G$ is nilpotent or solvable, $M$ is called a \emph{complex nilmanifold} or \emph{complex solvmanifold}, respectively. Lie-Hermitian manifolds form a large and distinctive class of locally homogeneous Hermitian manifolds. They are often used as testing ground  for conjectures in Hermitian geometry.

Denote by ${\mathfrak g}$ the Lie algebra of $G$. Left-invariant metrics on $G$ correspond to metrics (namely, inner products) on ${\mathfrak g}$, and left-invariant complex structures on $G$ correspond to complex structures on ${\mathfrak g}$, defined as linear transformations $J: {\mathfrak g} \to {\mathfrak g}$ satisfying $J^2 = -I$ and the integrability condition
\begin{equation} \label{integrability}
[x,y] - [Jx,Jy] + J[Jx,y] + J[x,Jy] =0, \ \ \ \ \ \forall \ x,y \in {\mathfrak g}.
\end{equation}
A Lie algebra ${\mathfrak g}$ equipped with a complex structure $J$ and a compatible metric $g = \langle \cdot, \cdot \rangle$ is called a \emph{Hermitian Lie algebra}, and it will be denoted by $({\mathfrak g}, J, g)$. From now on, we will use the same letters to denote the metric (or complex structure) on $G$ and ${\mathfrak g}$.

As a partial evidence, we will confirm Conjecture \ref{conj2} for Lie-Hermitian manifolds in four special cases: complex nilmanifolds,  {\em almost abelian Lie algebras} (namely, those containing an abelian ideal of codimension $1$), Lie algebras with $J$-invariant abelian ideal of codimension $2$, and solvable Lie algebras with complex commutators. The last case means that the Lie algebra ${\mathfrak g}$ of $G$ is solvable and satisfies $J{\mathfrak g}' = {\mathfrak g}'$, where ${\mathfrak g}' = [{\mathfrak g}, {\mathfrak g}]$ is the commutator.

\begin{theorem} \label{thm} Let $(M^n, g)$ be a Lie-Hermitian manifold with universal cover $(G, J, g)$. Assume  that the Lie algebra ${\mathfrak g}$ of $G$ satisfies one of the following: 
\begin{itemize}
\item [(i)]  ${\mathfrak g}$ is nilpotent, or
\item [(ii)]  ${\mathfrak g}$ is solvable with $J{\mathfrak g}' = {\mathfrak g}'$, where ${\mathfrak g}' = [{\mathfrak g}, {\mathfrak g}]$, or
\item [(iii)] $\mathfrak g$ is unimodular and almost abelian,  or
\item [(iv)]  $\mathfrak g$ is unimodular and contains an abelian ideal ${\mathfrak a}$ of codimension 2 with $J{\mathfrak a} = {\mathfrak a}$. 
\end{itemize}
If $\mathcal{C}_{\alpha,\beta} = c$, then $c=0$. If $\mathcal{C}_{\alpha,\beta} =0$ and $\beta \neq 0$, then $g$ must be Chern flat. 
\end{theorem}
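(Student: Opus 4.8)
The plan is to reduce the whole problem to the Lie algebra $\mathfrak g$ and then argue by polarization. Since $J$, $g$, and hence the Chern connection $\nabla$ are left-invariant, the curvature tensor $R$, the first Chern Ricci tensor $\operatorname{Ric}$, and the holomorphic sectional curvature $H$ are all left-invariant, so the hypothesis $\mathcal C_{\alpha,\beta}\equiv c$ becomes a single algebraic identity on $\mathfrak g^{1,0}$. I would fix a unitary frame $\{e_1,\dots,e_n\}$ of $\mathfrak g^{1,0}$ and record the structure constants. The integrability condition \eqref{integrability} is exactly the statement that $[\mathfrak g^{1,0},\mathfrak g^{1,0}]\subseteq\mathfrak g^{1,0}$, so all the bracket data is encoded by
\[
[e_i,e_j]=\sum_k C^k_{ij}\,e_k,\qquad [e_i,\bar e_j]=\sum_k\big(D^k_{ij}\,e_k-\overline{D^k_{ji}}\,\bar e_k\big),
\]
and unimodularity is the vanishing of $\operatorname{tr}\operatorname{ad}$. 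Plugging these into the standard formulas for the Chern connection and its curvature on a Hermitian Lie algebra expresses $\operatorname{Ric}_{i\bar i}$, $R_{i\bar jk\bar l}$, and $H$ as explicit quadratic polynomials in the $C$'s and $D$'s.

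I would then homogenize the constancy condition to
\[
\beta\,R(X,\bar X,X,\bar X)+\alpha\,\operatorname{Ric}(X,\bar X)\,|X|^2=c\,|X|^4,\qquad X\in\mathfrak g^{1,0},
\]
and compare coefficients of the monomials $x_i\bar x_jx_k\bar x_l$. The diagonal terms give $\alpha\operatorname{Ric}_{i\bar i}+\beta R_{i\bar ii\bar i}=c$ for every $i$, while the mixed monomials give relations tying together the symmetrized curvature components and $\operatorname{Ric}$. Summing the diagonal relations and contracting the full polarized system produces a short list of scalar identities relating $c$, the Chern scalar curvature $s=\sum_{i,k}R_{i\bar ik\bar k}$, and the other natural contraction $\sum_{i,k}R_{i\bar kk\bar i}$.

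The crux, and the step I expect to be the main obstacle, is to deduce $c=0$. A naive sign argument is unavailable, since in the non-K\"ahler setting neither $\operatorname{Ric}$ nor $H$ has a definite sign; the structural hypotheses (i)--(iv) together with unimodularity must be used. In the nilpotent case the first Chern class vanishes (the canonical bundle being trivial), so $\int_M\rho\wedge\omega^{n-1}=0$ and the constant Chern scalar curvature satisfies $s=0$; fed into the scalar identities above this forces $c=0$. In the remaining unimodular cases I would exploit the distinguished abelian ideal (of codimension $1$ or $2$) or the $J$-invariant commutator $\mathfrak g'$ to either exhibit a unit direction $X_0$ in which both $\operatorname{Ric}(X_0,\bar X_0)$ and $H(X_0)$ vanish --- yielding $c=\mathcal C_{\alpha,\beta}(X_0)=0$ at once --- or to produce the trace identity that annihilates $c$, with $\operatorname{tr}\operatorname{ad}=0$ playing an essential role. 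This forces a genuine case analysis, and I expect the codimension-two ideal of case (iv) and the commutator condition of case (ii) to be the most delicate, since there the curvature no longer depends on a single operator.

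Finally, once $c=0$ and $\beta\neq0$, I would return to the polarized identity, now reading $\beta R(X,\bar X,X,\bar X)=-\alpha\operatorname{Ric}(X,\bar X)|X|^2$. Because the explicit formulas write the relevant quantities as sums of squared norms of the structure constants, the system collapses to equations of the form ``sum of squares $=0$'', forcing the structure constants entering $R$ to vanish and hence $R\equiv0$, so $g$ is Chern flat. As a consistency check, should the metric turn out to be K\"ahler at some stage, the structure theory of compact K\"ahler nilmanifolds and solvmanifolds forces it to be a flat complex torus, again giving $R\equiv0$ and $c=0$.
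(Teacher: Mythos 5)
Your setup (reduce to the Lie algebra, fix a unitary frame, polarize $\mathcal C_{\alpha,\beta}\equiv c$ into identities on the components $R_{i\bar jk\bar\ell}$ and $R_{i\bar j}$ expressed through the structure constants $C,D$) is exactly the paper's framework, but the two steps you flag as the crux are where the proposal does not go through as described. For the nilpotent case, $c_1=0$ plus an integral identity at best gives you control of the Chern scalar curvature $s$; summing the diagonal relations only yields $\alpha s+\beta\sum_i R_{i\bar ii\bar i}=nc$, and $s=0$ leaves $\beta\sum_i R_{i\bar ii\bar i}=nc$ undetermined, so $c=0$ does not follow. (The actual mechanisms are: Lauret--Valencia's result that invariant metrics on complex nilmanifolds are Chern Ricci flat, reducing to the constant-$H$ case of Li--Zheng; or, in the paper's Lemma on $\mathfrak g'+J\mathfrak g'$ nilpotent, a Salamon frame in which $c$ is simultaneously $\beta$ times a sum of squares and $-\beta$ times another sum of squares.) For case (iv) there is in general \emph{no} unit direction where both $\operatorname{Ric}$ and $H$ vanish: under an admissible frame one has $R_{i\bar i}=0$ but $R_{i\bar ii\bar i}=|Z_{ii}|^2$ for $i\ge 2$, so the diagonal identity only gives $c=\beta|Z_{ii}|^2$. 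Concluding $c=0$ requires the off-diagonal identity $2c=\beta|Z_{ik}+Z_{ki}|^2$ together with a unitary change making $Z+{}^t\!Z$ diagonal when $n\ge3$, and a separate argument for $n=2$, including the exceptional ratio $\alpha=-2\beta$ where one must invoke an extra polarized component ($R_{1\bar 11\bar 2}$) to finish.

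The final step is also more delicate than a formal ``sum of squares $=0$'' collapse. After $c=0$, the identities in cases (iii) and (iv) are \emph{signed} linear combinations of $\lambda^2$, $|v|^2$, $|Z|^2$ with coefficients depending on $\alpha/\beta$ and $n$; the paper must split into cases according to whether $\alpha/\beta$ is equal to, less than, or greater than $-4/(2n+1)$, and choose suitable positive linear combinations in each case to force $\lambda=v=Z=0$. Unimodularity enters through explicit trace identities (e.g.\ $\operatorname{tr}(Y-X)=-\lambda$ and $\operatorname{tr}(Z\overline Z)=\lambda\operatorname{tr}(Y+Y^*)+\lambda^2$ coming from $d^2=0$), not merely as background. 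Even then Chern flatness is not read off as $R=0$ directly: one obtains $[Y,Y^*]=[X^*,Y]=0$ and appeals to the characterization of Chern-flat structures on these Lie algebras. So while your overall architecture is sound and matches the paper's, the proposal as written is missing the arguments that actually establish $c=0$ and the vanishing of the structure constants, and the specific shortcuts you propose for those steps would fail.
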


In particular, Conjecture \ref{conj2} holds for each of these four types of Lie-Hermitian manifolds. When $\beta =0$, the mixed curvature is the first Chern Ricci curvature, and in each of the above four cases the Chern Ricci curvature could vanish identically while not being Chern flat. So the assumption $\beta \neq 0$ is necessary in the $c=0$ case of Theorem \ref{thm}. 

When $\alpha =0$,  namely for the holomorphic sectional curvature case $\mathcal{C}_{0,1}=H$, Theorem \ref{thm} is known to be true: (i) is due to Li-Zheng \cite{LZ}, (ii) is due to Huang-Zheng \cite{HuangZ}, while (iii) and (iv) are proven by Li-Zheng in \cite{LZ25}, where they also discussed the constant holomorphic sectional curvature problem for the Levi-Civita connection. 

\begin{remark}
The referee kindly pointed out the following highly relevant facts to us, to which we are deeply grateful.

(1). For any complex nilmanifold, its first Chern Ricci curvature vanishes identically by the result of Lauret and Valencia \cite[Proppsition 2.1]{LV}. Therefore $\mathcal{C}_{\alpha,\beta} = \beta H$ for such a manifold, and case (i) of Theorem \ref{thm} holds by \cite{LZ}.

(2). For any almost abelian Lie algebra ${\mathfrak g}$, let ${\mathfrak a}\subset {\mathfrak g}$ be an abelian ideal of codimension $1$, then ${\mathfrak a}_J= {\mathfrak a} \cap J{\mathfrak a}$ is always an ideal of ${\mathfrak g}$ by \cite[Lemma 4.1]{AL}. Therefore, ${\mathfrak g}$ contains a $J$-invariant abelian ideal of codimension $2$, so case (iii) is included in case (iv). 

(3). The argument in the proof of case (ii) can be extended to cover all unimodular Hermitian Lie algebras $({\mathfrak g},J,g)$ such that ${\mathfrak g}'+J{\mathfrak g}'$ is nilpotent. In particular, if ${\mathfrak g}$ is solvable and its nilradical is $J$-invariant, then Conjecture \ref{conj2} holds.
\end{remark}

Based on his suggestions, we will skip the proof of case (i) to avoid redundancy. We will keep the proof of case (iii), however, for readers' convenience, as the proof of case (iv) is somewhat complicated yet analogous to that of case (iii), with the latter being more transparent. For item (3) above, we will add the following statement, but the credit should really go to the anonymous referee.

\begin{theorem} \label{thm1b}
Let $(M^n, g)$ be a Lie-Hermitian manifold with universal cover $(G, J, g)$. Assume  that the Lie algebra ${\mathfrak g}$ of $G$ satisfies the condition that ${\mathfrak g}' + J {\mathfrak g}'$ is nilpotent, where ${\mathfrak g}' = [{\mathfrak g}, {\mathfrak g}]$.  
If $\mathcal{C}_{\alpha,\beta} = c$, then $c=0$. If $\mathcal{C}_{\alpha,\beta} =0$ and $\beta \neq 0$, then $g$ must be Chern flat. 
\end{theorem}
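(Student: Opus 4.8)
The plan is to carry out the argument for case (ii) of Theorem~\ref{thm}, with the nilpotent ideal $\mathfrak n:=\mathfrak g'+J\mathfrak g'$ taking over the structural role played there by $\mathfrak g'$. The first step is to record that $\mathfrak n$ is genuinely a $J$-invariant ideal with abelian quotient. It is $J$-invariant by construction; to see it is a subalgebra, write $u=a+Jb$, $v=c+Jd$ with $a,b,c,d\in\mathfrak g'$, expand $[u,v]=[a,c]+[a,Jd]+[Jb,c]+[Jb,Jd]$, and note that the only term not manifestly in the ideal $\mathfrak g'$ is $[Jb,Jd]$, which the integrability condition \eqref{integrability} rewrites as $[b,d]+J[Jb,d]+J[b,Jd]\in\mathfrak g'+J\mathfrak g'$; the same manipulation gives $[\mathfrak g,\mathfrak n]\subseteq\mathfrak n$. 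Since $\mathfrak n\supseteq\mathfrak g'$, the quotient $\mathfrak g/\mathfrak n$ is abelian, and as $\mathfrak n$ is nilpotent this forces $\mathfrak g$ to be solvable (so the hypothesis refines the solvable-with-$J$-invariant-nilradical condition noted in the Remark); moreover $\mathfrak g$ is automatically unimodular, since it admits a lattice. I would then choose a unitary frame $e_1,\dots,e_n$ of $\mathfrak g^{1,0}$ adapted to the orthogonal splitting $\mathfrak g^{1,0}=\mathfrak m^{1,0}\oplus\mathfrak n^{1,0}$, with $e_1,\dots,e_r$ spanning a complement and $e_{r+1},\dots,e_n$ spanning $\mathfrak n^{1,0}$. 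Because every bracket lands in $\mathfrak g'\subseteq\mathfrak n$, all structure constants $C^k_{ij},D^k_{ij}$ with upper index $k\le r$ vanish; this is exactly the vanishing pattern that drives the proof of case (ii), now with $\mathfrak n$ in place of $\mathfrak g'$.

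With the setup in place I would dispose of $\beta=0$ separately. There $\mathcal C_{\alpha,0}=\alpha\operatorname{Ric}=c$ says the first Chern--Ricci form satisfies $\rho^{(1)}=(c/\alpha)\,\omega$; since $\rho^{(1)}$ is always $d$-closed, $\omega$ is closed and $g$ is K\"ahler. But a compact K\"ahler solvmanifold has vanishing first Chern class (it is, up to finite cover, a complex torus, and the invariant metric is then flat), so $\operatorname{Ric}=0$ and $c=0$. For the remaining (and main) case $\beta\neq0$, the starting point is polarization: both sides of $\alpha\operatorname{Ric}(X,\overline X)\,|X|^2+\beta\,R(X,\overline X,X,\overline X)=c\,|X|^4$ are Hermitian forms of bidegree $(2,2)$ in $X$, so comparing coefficients expresses the full symmetrization of $R_{i\bar jk\bar l}$ as an explicit combination of $\tfrac{c}{2}(\delta_{ij}\delta_{kl}+\delta_{il}\delta_{kj})$ and the symmetrized product of $\operatorname{Ric}$ with the metric. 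This pins down the symmetric part of the Chern tensor in terms of the single scalar $c$ together with the Ricci tensor.

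The heart of the proof is then to feed this pinned-down identity into the explicit formula for the Chern curvature of a Hermitian Lie algebra, which for left-invariant data is a universal quadratic expression in $C^k_{ij},D^k_{ij}$, and to let the nilpotent structure force everything to vanish. Concretely, I would induct on the lower central series of $\mathfrak n$: starting from a frame direction $e_p$ lying in the last nonzero term of the filtration, so that $\operatorname{ad}_{e_p}$ annihilates $\mathfrak n$ and the curvature components built from $e_p$ collapse to a short expression, I would compare the constancy identity evaluated on $e_p$ with its values on the remaining directions to isolate individual structure constants. Iterating upward through the filtration, together with the trace relations supplied by unimodularity, should kill all the $C$'s and $D$'s, yielding $c=0$ and $R=0$ at once. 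I expect the genuine obstacle to lie precisely in this last step in the mixed regime $\alpha\neq0$: the Ricci term couples $H$ to traces of the curvature, so the clean constant-holomorphic-sectional-curvature normal form available when $\alpha=0$ (the situation resolved by Li--Zheng and Huang--Zheng) is perturbed by a Ricci correction, and one must verify that the isolation argument survives both this correction and the presence of the extra abelian directions $\mathfrak m$ that occur whenever $\mathfrak n\supsetneq\mathfrak g'$.
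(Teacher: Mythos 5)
Your setup coincides with the paper's: the ideal ${\mathfrak n}={\mathfrak g}'+J{\mathfrak g}'$ (any subspace containing ${\mathfrak g}'$ is automatically an ideal, so your integrability computation is not even needed), an adapted unitary frame splitting ${\mathfrak g}^{1,0}$ into ${\mathfrak n}^{1,0}$ and its orthogonal complement, the resulting vanishing of structure constants (the paper additionally takes the frame of ${\mathfrak n}^{1,0}$ to be a Salamon frame, so that $D^j_{ik}=0$ unless $i>j$ inside ${\mathfrak n}$ --- note that the vanishing coming from ``brackets land in ${\mathfrak n}$'' sits on the \emph{first lower} index of $D$, not the upper one as you wrote), and the polarization identity of Lemma~\ref{lemma1}. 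Your disposal of the $\beta=0$ case is also acceptable. But the heart of the argument is exactly the part you leave open, and the difficulty you flag --- that the Ricci term might spoil the isolation of structure constants --- is resolved in the paper by a specific observation that your plan does not contain: \emph{in the admissible frame the diagonal Chern Ricci components vanish identically}. Indeed, $\sum_{t}R_{a\bar a t\bar t}=0$ for every $a$, because summing (\ref{chun3}) over $s$ makes the first two terms cancel by relabeling, while the cross term involves $\sum_s D^s_{sr}$, which vanishes term by term from the Salamon triangularity ($D^s_{s\ast}=0$ for $s\le r$) and $D^\ast_{\gamma\ast}=0$ for $\gamma>r$; unimodularity is what guarantees the frame can be chosen this way. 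Once the Ricci terms are gone, equations (\ref{chun1})--(\ref{chun2}) collapse to $\beta R_{a\bar aa\bar a}=c$ and $4\beta\widehat R_{a\bar ab\bar b}=2c$, i.e.\ precisely the constant holomorphic sectional curvature equations, and the known $\alpha=0$ arguments of Li--Zheng and Huang--Zheng apply verbatim.

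Concretely, the step you describe as ``comparing the constancy identity on $e_p$ with its values on the remaining directions'' is carried out in the paper as a sign argument that your induction on the lower central series does not reproduce: for the top Salamon direction one gets $c=\beta R_{r\bar rr\bar r}=\beta\sum_s|D^s_{rr}|^2$, while for any complement direction $c=\beta R_{\gamma\bar\gamma\gamma\bar\gamma}=-\beta\sum_s|D^\gamma_{s\gamma}|^2$; the two have opposite signs, forcing $c=0$ and the vanishing of those $D$'s at once. The remaining structure constants are then killed by the $\widehat R_{i\bar ik\bar k}=0$ identities exactly as in \cite{HuangZ,LZ}, and $R=0$ follows from (\ref{curvature}). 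Without the vanishing-Ricci observation your proposal does not close: you have correctly identified where the obstacle lies but not supplied the idea that removes it.
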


Next let us recall the Bismut connection $\nabla^b$ (also known as the Strominger connection in some literature) of a Hermitian manifold $(M^n,g)$, which is the unique connection satisfying the conditions that $\nabla^b g = 0$, $\nabla^b J = 0$, and its torsion is totally skew-symmetric (\cite{Bismut,Strominger}). The metric $g$ is called Bismut torsion-parallel (BTP for brevity)  if $\nabla^bT^b=0$, where $T^b$ is the torsion of the Bismut connection. BTP manifolds form a relatively large and interesting class of special Hermitian manifolds. It contains all Bismut K\"ahler-like (abbreviated as BKL) manifolds, meaning when the curvature of $\nabla^b$ obeys all K\"ahler symmetries. See \cite{ZZ-Crelle} for a proof of the AOUV Conjecture \cite{AOUV} by Angella-Otal-Ugarte-Villacampa, which can be alternatively stated as  BKL $\subset$ BTP.  Clearly, all Bismut flat manifolds (\cite{WYZ}) are  BKL, thus are examples of BTP manifolds. Other examples of BTP manifolds are Vaisman manifolds, namely,  locally conformally K\"ahler manifolds whose Lee form is parallel under the Levi-Civita connection.

In complex dimension $2$, BTP = BKL = Vaisman. Such surfaces were fully classified by Belgun \cite{Belgun} in $2000$. However, for $n \geq 3$, BKL and Vaisman are disjoint, and their union is a proper subset of the set of all non-balanced BTP manifolds. Additionally, for $n \geq 3$, there exist balanced (non-K\"ahler) BTP manifolds. Balanced BTP manifolds form a highly restrictive and interesting set, containing examples that are Chern flat or Fano. For further discussions on BTP manifolds, we refer readers to the preprint \cite{ZhaoZ24, ZhaoZ25} and the references therein. Now let us examine Conjecture \ref{conj2} for BTP  manifolds. We start with the balanced ones. In this case we are only able to confirm the conjecture in complex dimension $3$:

\begin{theorem} \label{thm2}
Let $(M^3,g)$ be a compact balanced BTP threefold which is not K\"ahler. If  $\mathcal{C}_{\alpha,\beta} = c$, then $c$ must be zero. If  $\mathcal{C}_{\alpha,\beta} = 0$ and $(\alpha ,\beta )\neq (0,0)$, then either $(M^3,g)$ is Chern flat, or $\beta =0$ and $(M^3,g)$ is Chern Ricci flat of middle type. 
\end{theorem}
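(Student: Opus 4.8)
The plan is to reduce the statement to a finite algebraic computation by exploiting the structure theory of compact balanced BTP threefolds developed in \cite{ZhaoZ24, ZhaoZ25}. Since $\nabla^b T^b = 0$, the Bismut torsion and curvature are parallel, so $(M^3,g)$ is locally homogeneous and, away from the K\"ahler case, falls into the explicit short list of model types described in that classification; compactness enters precisely through the applicability of this structure theory. In each model one has an adapted unitary frame $\{e_1,e_2,e_3\}$ in which the Chern torsion components $T^i_{jk}$ and the Chern curvature components $R_{i\bar{j}k\bar{l}}$ are given explicitly by a handful of constants, with the balanced hypothesis forcing the Chern torsion trace $\sum_i T^i_{ji}$ to vanish and thereby tying $\operatorname{Ric}$ to $R$ cleanly. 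My first step is to tabulate, for each model, the matrices of $R$ and of $\operatorname{Ric}$ in this frame.

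Writing $X = x_1 e_1 + x_2 e_2 + x_3 e_3$ and homogenizing, the hypothesis $\mathcal{C}_{\alpha,\beta}=c$ becomes the quartic identity
\begin{equation*}
\alpha\, \operatorname{Ric}(X,\overline{X})\,|X|^2 + \beta\, R(X,\overline{X},X,\overline{X}) = c\,|X|^4 ,
\end{equation*}
valid for all $X$. Equating the coefficients of the monomials $x_i x_k \overline{x}_j \overline{x}_l$ — equivalently, applying the standard polarization (Berger-type) lemma used in the constant holomorphic sectional curvature problem — turns this into a finite linear system in the curvature and torsion constants, with $\alpha,\beta,c$ as parameters. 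The quartic term $\beta\, R(X,\overline{X},X,\overline{X})$ controls the fully symmetrized part of $R$, while the term $\alpha\, \operatorname{Ric}(X,\overline{X})|X|^2$ contributes only to the part of the identity carrying a factor $|X|^2$; separating these two contributions organizes the system.

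Summing the diagonal equations (the coefficients of $|x_i|^2|x_k|^2$) amounts to averaging over the unit sphere, and yields the pointwise relation
\begin{equation*}
c = \frac{\alpha}{3}\, s_1 + \frac{\beta}{12}\,(s_1 + \hat s),
\end{equation*}
where $s_1 = \sum_{i,j}R_{i\bar{i}j\bar{j}}$ and $\hat s = \sum_{i,j}R_{i\bar{j}j\bar{i}}$ are the two Chern scalar invariants, both constant by the BTP hypothesis. The off-diagonal equations then impose rigidity among the nonzero torsion components; since $(M^3,g)$ is not K\"ahler, at least one torsion component is nonzero, and I expect these constraints, combined with the balanced relations, to force the scalar invariants and hence $c$ to vanish. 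Once $c=0$: if $\beta \neq 0$, the same rigidity should drive all torsion components to zero and force $R=0$, i.e.\ Chern flatness; if $\beta = 0$, the hypothesis reduces to $\operatorname{Ric} = 0$, and the only surviving non-Chern-flat model is the Chern Ricci flat ``middle type'' example, yielding the exceptional alternative in the statement.

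The main obstacle is the model-by-model analysis and, within it, disentangling the coupled roles of $\alpha$ and $\beta$ in the off-diagonal equations: I must show that a nonzero torsion is incompatible with $\beta \neq 0$ together with constant $\mathcal{C}_{\alpha,\beta}$, yet can persist when $\beta = 0$ provided $\operatorname{Ric}=0$. Correctly matching the surviving torsion pattern to the ``middle type'' model, so that the exceptional case is genuinely identified rather than spuriously excluded, is the delicate bookkeeping step; by contrast, once the adapted frame is fixed the curvature computation itself is mechanical.
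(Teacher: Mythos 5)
Your overall strategy---invoke the Zhao--Zheng classification of compact balanced non-K\"ahler BTP threefolds by the rank $r$ of the Streets--Tian tensor, pass to a special unitary frame, and polarize the identity $\alpha\operatorname{Ric}(X,\overline X)|X|^2+\beta R(X,\overline X,X,\overline X)=c|X|^4$---is exactly the paper's route, and your trace formula $c=\tfrac{\alpha}{3}s_1+\tfrac{\beta}{12}(s_1+\hat s)$ is correct. However, two of your intermediate expectations are wrong or incomplete, and they sit precisely where the real work is.

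First, your claimed mechanism for $c=0$ --- that the off-diagonal constraints ``force the scalar invariants and hence $c$ to vanish'' --- fails for the $r=1$ model (the Wallach threefold), whose Chern scalar invariants are fixed nonzero constants ($s_1=8$ at the base point in the paper's normalization). What actually happens there is that the polarized system is \emph{inconsistent}: comparing the coefficient of $|X_2|^4$ with that of $t|X_2|^2$ forces $c=0$ and $2\alpha+\beta=0$, and then the remaining coefficient forces $\beta=0$ and hence $\alpha=0$. So in that case the correct conclusion is that $\mathcal{C}_{\alpha,\beta}$ is never constant for any $(\alpha,\beta)\neq(0,0)$, not that torsion or curvature is driven to zero. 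Second, in the middle-type case ($r=2$) your diagonal equations only yield $(2\alpha+\beta)x=0$ and $(2\alpha+2\beta)x=\beta a_1^2$; when $\beta\neq 0$ these force $x\neq 0$ and $2\alpha+\beta=0$, and at that point the diagonal system is exhausted without a contradiction. The paper must bring in the genuinely off-diagonal Bismut component $R^b_{2\bar 1 1\bar 2}=iy$ together with the full mixed-index identity relating $R^b$, $\operatorname{Ric}$ and the torsion quadratics, obtaining $-8iy+4x=12a_1^2$ and hence $x=3a_1^2$, contradicting $x=a_1^2$. Your proposal does not identify this extra input, and without it the exceptional parameter line $2\alpha+\beta=0$, $\beta\neq 0$ survives and the theorem is not proved. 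Finally, note that for $\beta=0$ the middle-type alternative in the statement requires $x=y=0$ identically (Chern Ricci flat), not merely the vanishing of a scalar invariant; your bookkeeping should verify that the Ricci form $2\,d\alpha=2x(\varphi_{1\bar 1}+\varphi_{2\bar 2})+2iy(\varphi_{2\bar 1}-\varphi_{1\bar 2})$ vanishes as a form.
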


Note that compact non-K\"ahler balanced BTP threefolds are either compact quotients of $SL(2,{\mathbb C})$ (the Chern flat case), or the Wallach threefold (the Fano case), or in the so-called {\em middle type}, which means the rank of the {\em Streets-Tian tensor} $B$  (\cite[Formula (4)]{ST11}, where it was denoted as $Q^2$) is $2$. The local geometry of the middle type ones are determined by two real-valued smooth functions $x$, $y$, and it is Chern Ricci flat (or equivalently Bismut Ricci flat) if and only if $x=y=0$ identically. 

For non-balanced BTP manifolds, we have the following:

\begin{theorem} \label{thm3}
Let $(M^n,g)$ be a compact non-balanced BTP manifold with $\mathcal{C}_{\alpha,\beta} = c$. Then $c$ must be zero. 
\end{theorem}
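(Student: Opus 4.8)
The plan is to average the curvature condition into an identity between the two Chern scalar curvatures, and then to use the rigidity imposed by a non-balanced, Bismut-parallel torsion, together with the closedness of the first Chern--Ricci form, to force the constant to vanish. Before averaging, the case $\beta=0$ (so $\alpha\neq 0$) is immediate and already uses the non-balanced hypothesis: if $\mathcal{C}_{\alpha,0}(X)=\alpha\operatorname{Ric}(X,\overline X)=c$ for every unit $X$, then by homogeneity the first Chern--Ricci $(1,1)$-form $\rho$ equals $\tfrac{c}{\alpha}\,\om$, and since $\rho$ is $d$-closed, $c\neq 0$ would give $d\om=0$, making $g$ Kähler and in particular balanced, a contradiction. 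Hence $c=0$ when $\beta=0$, and from now on I assume $\beta\neq 0$.

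Next I would average the pointwise identity $\mathcal{C}_{\alpha,\beta}(X)=c$ over the unit sphere of each tangent space. Writing the two Chern scalar curvatures as $s=\sum_{i,k}R(e_i,\overline{e_i},e_k,\overline{e_k})$ and $\hat s=\sum_{i,k}R(e_i,\overline{e_k},e_k,\overline{e_i})$, the standard Hermitian averaging formulas give that the mean of $\operatorname{Ric}(X,\overline X)$ over $|X|=1$ is $s/n$ and the mean of $H(X)$ is $(s+\hat s)/(n(n+1))$. Thus the hypothesis is equivalent to the pointwise scalar identity
\begin{equation*}
(\alpha(n+1)+\beta)\,s+\beta\,\hat s=c\,n(n+1).\tag{$\star$}
\end{equation*}
This isolates exactly the quantities I must control globally, namely $s$ and its companion $\hat s$, whose difference $s-\hat s$ is governed entirely by the Chern torsion.

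I would then integrate $(\star)$ over $M$ and bring in the structure theory of BTP manifolds \cite{ZhaoZ24,ZhaoZ25}. Because $\nabla^bT^b=0$, the torsion and in particular the Lee (torsion) $(1,0)$-form $\eta$ are $\nabla^b$-parallel, so $|\eta|$ is a nonzero constant precisely because $g$ is non-balanced. Parallelism makes the quadratic torsion terms constant and annihilates the first covariant derivatives of the torsion, which are exactly the terms appearing in the compact-Hermitian identity for $\int_M(s-\hat s)\,\om^n$ and in the expression $\int_M s\,\om^n=\tfrac{1}{(n-1)!}\int_M\rho\wedge\om^{n-1}$ coming from the closed form $\rho$. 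The goal is to show that, after these simplifications, the integral of the left side of $(\star)$ collapses to torsion and divergence terms that the nonzero parallel Lee form forces to cancel, leaving $c\,n(n+1)\operatorname{vol}(M)=0$ and hence $c=0$. The degenerate coefficient case $\alpha(n+1)+\beta=0$ should be treated directly from $(\star)$, where it reads $\beta\hat s=c\,n(n+1)$, using the torsion representation of $\hat s$.

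The main obstacle is the step where BTP parallelism is combined with the integral identities: I must pin down the exact torsion expression for $s-\hat s$ and for the Gauduchon-type term $\int_M\rho\wedge\om^{n-1}$, and verify that the non-balanced parallel Lee form is what makes their combination vanish in every dimension. This is also where non-balancedness is essential, since the balanced case genuinely behaves differently and requires the separate, dimension-restricted analysis of Theorem \ref{thm2}. I expect to verify along the way that $s$ and $\hat s$ are in fact constant on a BTP manifold; this would let me replace the integration step by a pointwise evaluation of $(\star)$ in a $\nabla^b$-parallel unitary frame adapted to the Lee direction, which should make the cancellation transparent.
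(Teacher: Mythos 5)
Your $\beta=0$ case is fine and matches the remark the paper makes after Conjecture~\ref{conj2}: constancy of $\mathcal{C}_{\alpha,0}$ with $c\neq 0$ forces $\omega$ to be a multiple of the closed Chern--Ricci form, hence K\"ahler, contradicting non-balancedness. The averaging formula you write down for $\beta\neq 0$ is also correct: the mean of $\operatorname{Ric}(X,\overline X)$ over unit $X$ is $s/n$ and the mean of $H(X)$ is $(s+\hat s)/(n(n+1))$, so $(\star)$ is a valid consequence of the hypothesis.

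The gap is that $(\star)$ cannot carry the proof. Averaging over the unit sphere discards exactly the directional information that makes the theorem true, and your plan for closing the argument --- showing that $(\alpha(n+1)+\beta)\int_M s+\beta\int_M\hat s$ vanishes using only BTP parallelism and the nonzero parallel Lee form --- would, if valid, prove that this integral vanishes on \emph{every} non-balanced BTP manifold for \emph{every} $(\alpha,\beta)$, since nothing in that step uses the curvature hypothesis. That is false: on the standard Hopf manifold $X^n$ (which is non-balanced BTP, indeed Vaisman) one has $s>0$ and $\hat s\geq 0$, so the integral is nonzero for generic $(\alpha,\beta)$. So either the hoped-for cancellation fails, or you must re-inject the pointwise hypothesis, which the proposal never does; evaluating $(\star)$ pointwise in a parallel frame does not help, as it is still only the averaged (trace) consequence of the hypothesis and is a single scalar equation. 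What the paper actually uses is the condition in the distinguished Lee direction: under an admissible frame one has $\eta=\lambda\varphi_n$, $T^n_{\ast\ast}=0$, and $\nabla^b e_n=0$, hence $R^b_{\ast\bar\ast\ast\bar n}=0$; evaluating the constancy condition at $X=e_n$ then gives $\alpha R_{n\bar n}=c$, while a separate identity coming from $\nabla^bT=0$ (summing $R^b_{n\bar nk\bar k}-R_{n\bar nk\bar k}=\sum_s|T^k_{ns}|^2-\sum_s|T^s_{nk}|^2$ over $k$) gives $R_{n\bar n}=0$, forcing $c=0$. Your argument as written never isolates the $e_n$ direction, and without that step it does not close.
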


In other words Conjecture \ref{conj2} holds for all  compact non-balanced BTP manifolds. Note that such a manifold  could never be Chern flat, yet there are such examples where $\mathcal{C}_{\alpha,\beta} = 0$ identically for some special values of $(\alpha ,\beta)$. As a wild speculation, we propose the following:

\begin{conjecture} \label{conj3}
Let $(M^n,g)$ be a compact non-balanced BTP manifold. If $\mathcal{C}_{\alpha,\beta} = 0$ identically, then either $\beta =0$, or $k\alpha +\beta =0$ for some integer $k$ between $2$ and $n$. 
\end{conjecture}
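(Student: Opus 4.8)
The plan is to reduce $\mathcal{C}_{\alpha,\beta}\equiv 0$ to a proportionality $\operatorname{Ric}=tH$ and then to show that, on a compact non-balanced BTP manifold, such a proportionality forces $t$ to be an integer in $\{2,\dots,n\}$. If $\beta=0$ we are already in the first alternative, so assume $\beta\neq 0$. When $\alpha=0$ the hypothesis reads $\beta H\equiv 0$, hence $H\equiv 0$; I would first rule this out by showing that a compact non-balanced BTP manifold cannot have identically vanishing holomorphic sectional curvature. This should follow from the structure theory of \cite{ZhaoZ24,ZhaoZ25}: parallel Bismut torsion together with a nonzero Lee form pins down a Hopf-type contribution to the Chern curvature in the Lee direction, and $H\equiv 0$ is incompatible with it. Granting this, we may set $t=-\beta/\alpha\neq 0$, so that $\mathcal{C}_{\alpha,\beta}\equiv 0$ is equivalent to $\operatorname{Ric}(X,\overline{X})=t\,H(X)$ for all unit $X\in T^{1,0}M$, and the target $k\alpha+\beta=0$ becomes exactly $t=k\in\{2,\dots,n\}$.

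Next I would turn $\operatorname{Ric}=tH$ into a tensor identity. Multiplying by $|X|^4$, it is equivalent to $\operatorname{Ric}(X,\overline X)\,|X|^2=t\,R(X,\overline X,X,\overline X)$ for all $X$, i.e. to $Q=tS$, where $S$ is the full symmetrization of the Chern curvature over its two holomorphic and two anti-holomorphic slots and $Q$ is the same symmetrization applied to $\operatorname{Ric}_{i\bar j}\,\delta_{kl}$. Contracting $Q=tS$ over one index pair expresses $\operatorname{Ric}$ and the scalar curvature through the four Chern--Ricci traces of $R$; contracting once more yields the scalar constraint
\[
(n+1)\,s=t\,(s+\tilde s),\qquad s=\sum_{i,k}R_{i\bar i k\bar k},\quad \tilde s=\sum_{i,k}R_{i\bar k k\bar i},
\]
so that $t=(n+1)s/(s+\tilde s)$. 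This is only a necessary condition: for the standard Hopf metric one computes $s=n(n-1)$ and $\tilde s=n-1$, giving $t=n$, yet there $\operatorname{Ric}$ is constant while $H(X)=1-|x_n|^2$ is not, so $\operatorname{Ric}=tH$ fails. Hence the full force of $Q=tS$, not just its traces, must be used.

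The heart of the argument is to combine $Q=tS$ with the BTP structure. In a local $\nabla^b$-parallel unitary frame the Chern torsion is constant and the Lee direction $e_n$ is distinguished; the Bismut-to-Chern curvature relations then split the symmetrized curvature $S$ into a torsion (Hopf-type) piece of the shape $\delta_{ij}A_{k\bar l}$ and a piece built from the Bismut curvature $R^b$. The torsion piece alone satisfies $Q=tS$ only when $A$ is scalar, which forces $t=n$; to realize an intermediate integer $k$ the identity must constrain $R^b$ to be space-form-like on a $J$-invariant subspace whose complex dimension is exactly $k$. The plan is therefore to evaluate $Q=tS$ on $e_n$, on the transverse directions $e_j$ $(j<n)$, and on mixed vectors $e_j+e_n$, so as to (i) extract the strictly positive defect contributed by the Lee direction, giving $t\geq 2$, and (ii) recognize the transverse symmetrized curvature as that of a complex space form on an integer-dimensional subspace, giving $t=k\leq n$.

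The main obstacle is precisely step (ii): converting the tensor identity $Q=tS$ into an integrality statement. Two difficulties stand out. First, an explicit normal form for the Chern curvature of an arbitrary compact non-balanced BTP manifold is available only in low dimensions and for special subclasses, so one needs either such a normal form in general dimension or a dimension-free argument. Second, even with a normal form, integrality of $t$ is a rigidity phenomenon: one must show that $\operatorname{Ric}=tH$ cannot be met by an ``intermediate'' configuration, but only when the Bismut-curvature block is a genuine space form on a subspace of integer complex dimension. Establishing this rigidity, together with excluding the boundary values $t=1$ and $t>n$ and the degenerate case $\alpha=0$, is where I expect the bulk of the work---and the genuine novelty beyond Theorem \ref{thm3}---to lie.
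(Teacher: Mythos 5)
The statement you are addressing is Conjecture \ref{conj3}, which the paper explicitly introduces as ``a wild speculation'' and does not prove; the only supporting material in the paper is a pair of examples (Chern Ricci flat BTP nilmanifolds for $\beta=0$, and $X^k\times{\mathbb T}^{n-k}$ for $k\alpha+\beta=0$) showing that the excluded parameter values genuinely occur. So there is no proof in the paper to compare yours against, and your text is in any case a research plan rather than a proof: the decisive step --- converting the tensor identity $Q=tS$ into the integrality $t\in\{2,\dots,n\}$ --- is essentially the content of the conjecture itself, and you leave it open, as you acknowledge. The reduction to $\operatorname{Ric}=tH$ and the double-trace identity $(n+1)s=t(s+\tilde s)$ are correct but only produce necessary conditions; nothing in the proposal supplies the rigidity mechanism that would force $t$ to be an integer, nor the exclusion of $t=1$, of $t>n$, or of the case $\alpha=0$ (i.e.\ $H\equiv 0$ on a compact non-balanced BTP manifold), each of which is itself an unresolved sub-problem here that you assert rather than establish.

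One concrete error: your claimed illustration on the standard Hopf manifold is wrong. There $\operatorname{Ric}(X,\overline X)=n\bigl(1-|\langle X,z/|z|\rangle|^2\bigr)$ is \emph{not} constant in $X$; it equals $nH(X)$ exactly, which is precisely the paper's own example showing ${\mathcal C}_{1,-n}\equiv 0$ (the exceptional value $k=n$). So this example does not demonstrate that ``the full force of $Q=tS$ must be used''---on the contrary, it is an instance where $\operatorname{Ric}=tH$ holds. The broader point that the trace identity alone cannot yield integrality may well be true, but you would need a different witness for it, and in any case the heart of the conjecture remains untouched by the proposal.
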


We remark that it is necessary to exclude the above `exceptional values' for $(\alpha , \beta)$. The $\beta =0$ case corresponds to the (first) Chern Ricci curvature. Take any complex nilmanifold that is BTP (there are  both balanced and non-balanced BTP nilmanifolds), it is always Chern Ricci flat by Lauret-Valencia \cite{LV}, namely, ${\mathcal C}_{1,0}=0$ identically. Also, if we consider the standard Hopf manifold 
$$X^k=({\mathbb C}^k\setminus \{ 0\})/\langle f\rangle , \ \ \ f(z)=2z, \ \ \ \ \mbox{with metric} \ \ \ \omega = \sqrt{-1} \,\frac{\partial \overline{\partial}\, |z|^2}{|z|^2}. $$
Then it is easy to see that $\operatorname{Ric}=kH$, hence ${\mathcal C}_{1,-k}\equiv 0$  on $X^k$. By taking $M^n=X^k\times {\mathbb T}^{n-k}$ to be the product of $X^k$ with a flat complex torus, we see that on the non-balanced BTP manifold $M^n$ we have ${\mathcal C}_{1,-k}\equiv 0$. Here $k$ could be any integer between $2$ and $n$.

The article is organized as follows. In the next section, we will set up the notations and collect some known results from existing literature that will be used in later proofs. In the sections 3, 4, and 5,  we will give
proofs to the cases (ii), (iii), and (iv) of Theorem \ref{thm}, respectively. In the last section, we will give a
proof to Theorems \ref{thm2} and \ref{thm3}.

\vspace{0.3cm}

\section{Preliminaries}
Let $(M^n,g)$ be a Hermitian manifold and denote by  $\omega$ the  K\"ahler form associated with $g$. Denote by $\nabla$, $\nabla^b$ the Chern and Bismut connection, respectively.

Fix any $p \in M^n$, let $\{ e_1, \ldots , e_n\} $ be a frame of $(1,0)$-tangent vectors of $M^n$ in a neighborhood of $p$, and let $\{ \varphi_1, \ldots , \varphi_n\}$ be the dual coframe of $(1,0)$-forms. We will also write  $e=\,^t\!(e_1, \ldots , e_n)$ and $\varphi = \,^t\!( \varphi_1, \ldots , \varphi_n)$ and view them as column vectors. Write $g=\langle \cdot  , \cdot \rangle $ and extend it bi-linearly over ${\mathbb C}$. We will follow the notations of  \cite{VYZ,YZ18Cur,YZ18Gau}.

For the Chern connection $\nabla$, let us denote by $\theta$,  $\Theta$ respectively the matrices of connection and
curvature under the frame $e$, and by $\tau$ the column vector of the
torsion $2$-forms under $e$. Then the structure
equations and Bianchi identities are given by
\begin{equation*}
\left\{ \begin{array}{llll} d \varphi = - \ ^t\!\theta \wedge \varphi + \tau,   \\
d  \theta = \theta \wedge \theta + \Theta,  \\
d \tau = - \ ^t\!\theta \wedge \tau + \ ^t\!\Theta \wedge \varphi, \\
d  \Theta = \theta \wedge \Theta - \Theta \wedge \theta. \end{array} \right.
\end{equation*}
The entries of $\Theta$ are all $(1,1)$-forms, while the entries of the column vector $\tau $ are all $(2,0)$-forms, under any frame $e$.
Similar symbols such as $\theta^b,\Theta^b$ and $\tau^b$ are applied to Bismut connection, then we have
\begin{equation*}
\Theta^b = d\theta^b -\theta^b \wedge \theta^b.
\end{equation*}

Let $\gamma =\nabla^b-\nabla$ be the tensor, and for simplicity we also write $\gamma = \theta^b-\theta$ for its matrix representation under $e$. It follows from \cite{YZ18Cur} that when $e$ is unitary, the matrices $\gamma$ is given by
\begin{equation*}
\gamma_{ij} = \sum_k \{ T^j_{ik}\varphi_k  - \overline{T^i_{jk}} \,\overline{\varphi}_k \} , \ \
\end{equation*}
where $T^k_{ij}$, satisfying $T^k_{ji}=-T^k_{ij}$, are the components of the Chern torsion, given by
\[ \tau_k =\frac{1}{2}  \sum_{i,j=1}^n T_{ij}^k \,\varphi_i\wedge \varphi_j \ = \sum_{1\leq i<j\leq n}  \ T_{ij}^k \varphi_i\wedge \varphi_j.\]
Let $R_{i\bar{j}k\bar{\ell}} $ be the short hand notation for $R(e_i, \overline{e}_j, e_k, \overline{e}_{\ell})$, where $R$ denotes the curvature tensor of the Chern connection $\nabla$. Similarly, denote by $R^b$ the curvature of $\nabla^b$. We have
$$ \Theta_{ij} = \sum_{k,\ell =1}^n R_{k\overline{\ell} i \overline{j}} \, \varphi_k \wedge \overline{\varphi}_{\ell}, \ \ \ \ \Theta^b_{ij} = \sum_{k,\ell =1}^n \big( \frac{1}{2} R^b_{k\ell i \overline{j}} \,\varphi_k \wedge \varphi_{\ell}  + \frac{1}{2} R^b_{\bar{k}\bar{\ell} i \overline{j}} \,\overline{\varphi}_k \wedge \overline{\varphi}_{\ell} + R^b_{k\overline{\ell} i \overline{j}} \,\varphi_k \wedge \overline{\varphi}_{\ell} \big) . $$
As in \cite{LZ}, let us introduce the {\em symmetrization} of a $(4,0)$-tensor:
$$ \widehat{R}_{i\bar{j}k\bar{\ell}} = \frac{1}{4} \big(  R_{i\bar{j}k\bar{\ell}} + R_{k\bar{j}i\bar{\ell}} + R_{i\bar{\ell}k\bar{j}} + R_{k\bar{\ell}i\bar{j}} \big) , $$
for any $1\leq i,j,k,\ell \leq n$. Then it is easy to see that

\begin{lemma} \label{lemma1}
Let $(M^n,g)$ be a Hermitian manifold. Then $\, \mathcal{C}_{\alpha,\beta}=c \, \Longleftrightarrow$
\begin{equation} \label{mix1} 
 4\beta\widehat{R}_{i\bar{j}k\bar{\ell}} +
\alpha (R_{i\bar{j}}g_{k\bar{l}}+R_{k\bar{j}}g_{i\bar{l}}+
R_{i\bar{l}}g_{k\bar{j}}+R_{k\bar{l}}g_{i\bar{j}})
= 2c\big( g_{i\bar{j}} g_{k\bar{\ell}} + g_{i\bar{\ell }} g_{k\bar{j}}  \big), \ \ \ \forall \ 1\leq i,j,k,\ell \leq n,
\end{equation}
where $R_{i\bar{j}}=\operatorname{Ric}(e_i, \overline{e}_j)$ denotes the components of the first Chern Ricci curvature. In particular,  when the frame $e$ is unitary, we have $R_{i\bar{j}}=\sum_{s=1}^n R_{i\bar{j}s\bar{s}}$ and 
\begin{eqnarray}
&& \beta R_{i\bar{i}i\bar{i}}+\alpha R_{i\bar{i}} \ =\ c, \ \ \ \ \forall \ 1\leq i \leq n,\label{chun1} \\
&& 4\beta \widehat {R}_{i\bar{i}k\bar{k}}+\alpha (R_{i\bar{i}}+R_{k\bar{k}}) \ = \ 2c, \ \ \ \ \forall \ 1\leq i\neq k \leq n. \label{chun2}
\end{eqnarray}
\end{lemma}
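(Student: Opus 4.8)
The plan is to convert the pointwise scalar condition $\mathcal{C}_{\alpha,\beta}=c$ into a tensorial identity by polarization. Fix $p$ and a frame $e$, and write an arbitrary $(1,0)$ vector as $X=\sum_i x_i e_i$ with $x\in\mathbb{C}^n$. Since $\operatorname{Ric}(X,\overline{X})$, $R(X,\overline{X},X,\overline{X})$ and $|X|^2$ are bihomogeneous in $(x,\overline{x})$ of bidegrees $(1,1)$, $(2,2)$ and $(1,1)$ respectively, the requirement that $\alpha\operatorname{Ric}(X,\overline{X})+\beta H(X)=c$ hold for every unit $X$ is, after clearing the normalization $|X|=1$ by homogeneity, equivalent to the identity
\[
\sum_{i,j,k,\ell}\big(\beta R_{i\bar{j}k\bar{\ell}}+\alpha R_{i\bar{j}}g_{k\bar{\ell}}-c\,g_{i\bar{j}}g_{k\bar{\ell}}\big)\,x_i\overline{x}_j x_k\overline{x}_\ell=0,\qquad\forall\,x\in\mathbb{C}^n,
\]
where I have simply expanded $\operatorname{Ric}(X,\overline{X})|X|^2$, $R(X,\overline{X},X,\overline{X})$ and $c|X|^4$ in the frame $e$.

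The key step is the standard polarization fact for Hermitian forms: a bihomogeneous expression $\sum_{i,j,k,\ell}A_{i\bar{j}k\bar{\ell}}\,x_i\overline{x}_j x_k\overline{x}_\ell$ vanishes for all $x\in\mathbb{C}^n$ if and only if the symmetrization $\widehat{A}_{i\bar{j}k\bar{\ell}}$, obtained by averaging over the swaps $i\leftrightarrow k$ and $j\leftrightarrow\ell$, vanishes for all indices, because the monomials $x_ix_k$ and $\overline{x}_j\overline{x}_\ell$ only detect the part of $A$ symmetric under those swaps. Applying the linear operator $\,\widehat{\ \cdot\ }\,$ to the coefficient above and multiplying by $4$ then produces exactly \eqref{mix1}, once one records the elementary identities $4\,\widehat{R_{i\bar{j}}g_{k\bar{\ell}}}=R_{i\bar{j}}g_{k\bar{\ell}}+R_{k\bar{j}}g_{i\bar{\ell}}+R_{i\bar{\ell}}g_{k\bar{j}}+R_{k\bar{\ell}}g_{i\bar{j}}$ and $4\,\widehat{g_{i\bar{j}}g_{k\bar{\ell}}}=2\big(g_{i\bar{j}}g_{k\bar{\ell}}+g_{i\bar{\ell}}g_{k\bar{j}}\big)$. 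The converse implication is immediate: contracting \eqref{mix1} against $x_i\overline{x}_j x_k\overline{x}_\ell$ and summing, each symmetrized block collapses back to the same contraction, yielding $4\beta R(X,\overline{X},X,\overline{X})+4\alpha\operatorname{Ric}(X,\overline{X})|X|^2=4c|X|^4$, which is $\mathcal{C}_{\alpha,\beta}=c$ for unit $X$.

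Finally, the displayed special cases follow by specializing \eqref{mix1} in a unitary frame, where $g_{i\bar{j}}=\delta_{ij}$ and $R_{i\bar{j}}=\sum_s R_{i\bar{j}s\bar{s}}$. Taking $i=j=k=\ell$ collapses the symmetrization, since $\widehat{R}_{i\bar{i}i\bar{i}}=R_{i\bar{i}i\bar{i}}$, and gives $4\beta R_{i\bar{i}i\bar{i}}+4\alpha R_{i\bar{i}}=4c$, i.e.\ \eqref{chun1}; taking $i=j$ and $k=\ell$ with $i\neq k$ kills the off-diagonal metric factors and gives $4\beta\widehat{R}_{i\bar{i}k\bar{k}}+\alpha(R_{i\bar{i}}+R_{k\bar{k}})=2c$, i.e.\ \eqref{chun2}. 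The only ingredient that is not pure bookkeeping is the polarization fact of the second paragraph, which is why the statement is flagged as ``easy to see''; I would justify it by extracting the coefficient of each monomial $x_i x_k\overline{x}_j\overline{x}_\ell$, which equals the symmetrized $\widehat{A}$ up to the multiplicity of its symmetry class.
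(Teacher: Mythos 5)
Your proposal is correct and is exactly the standard polarization argument the authors have in mind when they say the lemma is "easy to see" (the paper gives no written proof): clearing the denominator $|X|^4$, reading off that the vanishing of the resulting bihomogeneous $(2,2)$-form is equivalent to the vanishing of its $(i\leftrightarrow k,\ j\leftrightarrow\ell)$-symmetrization, and then specializing to a unitary frame. The two elementary symmetrization identities you record and the index specializations all check out.
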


Here and from now on we will use the abbreviation:
\begin{equation} \label{eq:vandw}
w=\sum_r T^r_{ik} \overline{T^r_{j\ell }}, \ \  v^j_i= \sum_r T^j_{ir} \overline{T^k_{\ell r}}, \ \  v^{\ell}_i= \sum_r T^{\ell}_{ir} \overline{T^k_{j r}}, \ \  v^j_k= \sum_r T^j_{kr} \overline{T^i_{\ell r}}, \ \  v^{\ell}_k= \sum_r T^{\ell}_{kr} \overline{T^i_{j r}}.
\end{equation}
Following \cite{ChenZ2}, we can take the $(1,1)$-part in $\Theta^b=d\theta^b - \theta^b  \wedge \theta^b $ and obtain
\begin{lemma} \label{lemma2}
Let $(M^n,g)$ be a Hermitian manifold. Under any local unitary frame $e$ it holds that
$$ R^b_{i\bar{j}k\bar{\ell}} - R_{i\bar{j}k\bar{\ell}} =  T_{ik;\overline{j}}^{\ell} + \overline{T^k_{j\ell ; \overline{i}}}   + v^{\ell}_i - v^j_i -v^{\ell}_k - w,  \ \ \ \
\ \widehat{R}_{i\bar{j}k\bar{\ell}}= \ \widehat{R}^b_{i\bar{j}k\bar{\ell}} + \widehat{v}.\ \ \ \forall \ 1\leq i,j,k,\ell \leq n,$$
for any $1\leq i,j,k,\ell \leq n$, where $w$ and $v^j_i$ etc. are given by (\ref{eq:vandw}), and indices after the semicolon stand for covariant derivatives with respect to the Bismut connection $\nabla^b$, while
$$ 4\hat{v} \ = \ v^j_i + v^{\ell}_k + v^{\ell}_i + v^j_k. $$
\end{lemma}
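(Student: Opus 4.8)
The plan is to compare the two curvature matrices directly through the difference tensor $\gamma = \theta^b-\theta$, exactly as the formula $\Theta^b = d\theta^b-\theta^b\wedge\theta^b$ suggests. Substituting $\theta^b = \theta+\gamma$ and using the Chern structure equation $d\theta = \theta\wedge\theta+\Theta$ gives the matrix identity
\begin{equation*}
\Theta^b-\Theta = d\gamma-\theta\wedge\gamma-\gamma\wedge\theta-\gamma\wedge\gamma = \big(d\gamma-\theta^b\wedge\gamma-\gamma\wedge\theta^b\big)+\gamma\wedge\gamma,
\end{equation*}
the second form obtained by writing $\theta = \theta^b-\gamma$. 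Both sides are matrices of $2$-forms; since the entries of $\Theta$ are of type $(1,1)$ and the $(1,1)$-part of $\Theta^b_{k\ell}$ is $\sum_{i,j}R^b_{i\bar j k\bar\ell}\,\varphi_i\wedge\overline{\varphi}_j$, taking the $(1,1)$-component of the $(k,\ell)$-entry and reading off the coefficient of $\varphi_i\wedge\overline{\varphi}_j$ produces the scalar identity for $R^b_{i\bar j k\bar\ell}-R_{i\bar j k\bar\ell}$. Thus the whole proof reduces to computing the $(1,1)$-parts of $d\gamma$ and $\gamma\wedge\gamma$ and organizing the resulting terms.

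To make the first-order terms transparent, I would fix $p\in M^n$ and choose the unitary frame $e$ to be Bismut-normal at $p$, i.e. $\theta^b(p)=0$; then every Bismut covariant derivative reduces to an ordinary derivative at $p$, and $\theta=-\gamma$ there. With $\gamma_{k\ell}=\sum_m T^\ell_{km}\varphi_m-\sum_m\overline{T^k_{\ell m}}\,\overline{\varphi}_m$, the $(1,1)$-part of $d\gamma_{k\ell}$ splits into a genuinely first-order piece, where $d$ falls on the torsion coefficients, and a piece where $d$ falls on the coframe, handled by the structure equation $d\varphi = -{}^{t}\theta\wedge\varphi+\tau$ (and its conjugate) with $\theta=-\gamma$. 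Using the antisymmetry $T^\ell_{ij}=-T^\ell_{ji}$ and $e_i(\overline{f})=\overline{\overline{e}_i(f)}$, the first piece contributes exactly $\overline{e}_j(T^\ell_{ik})+e_i(\overline{T^k_{j\ell}}) = T^\ell_{ik;\overline j}+\overline{T^k_{j\ell;\overline i}}$ to the coefficient of $\varphi_i\wedge\overline{\varphi}_j$, which are precisely the two first-order terms in the statement.

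All remaining contributions are quadratic in the torsion and come from two sources: the coframe piece of $d\gamma$ (where $\theta=-\gamma$), and the term $\gamma\wedge\gamma$. A direct expansion of $(\gamma\wedge\gamma)_{k\ell}=\sum_p\gamma_{kp}\wedge\gamma_{p\ell}$, keeping only the $(1,1)$-part and matching the coefficient of $\varphi_i\wedge\overline{\varphi}_j$, yields $v^\ell_i-w$ after applying the antisymmetry of $T$ and the definitions (\ref{eq:vandw}); the coframe contribution must then supply the remaining $-v^j_i-v^\ell_k$, so that the total quadratic part is $v^\ell_i-v^j_i-v^\ell_k-w$ and the first identity follows. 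I expect this step, the precise bookkeeping of the quadratic terms, to be the main obstacle: converting the Chern-connection contribution on the coframe index into the correct $v$-terms and checking every sign against (\ref{eq:vandw}) is entirely mechanical but error-prone, and this is exactly where the choice of a Bismut-normal frame pays off, by removing all genuine connection terms.

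The second identity then follows by applying the symmetrization operator $\widehat{\,\cdot\,}$ to the first, with no further geometry. Under the averaging over $i\leftrightarrow k$ and $j\leftrightarrow\ell$ defining $\widehat{\,\cdot\,}$, the two first-order terms vanish, since $T^\ell_{ik;\overline j}$ is antisymmetric in the swap $i\leftrightarrow k$ over which it is symmetrized, and likewise for $\overline{T^k_{j\ell;\overline i}}$. The same antisymmetry gives $\widehat w=0$, and a short check shows that each of $v^j_i$, $v^\ell_i$, $v^\ell_k$, $v^j_k$ symmetrizes to the common value $\hat v=\tfrac14(v^j_i+v^\ell_k+v^\ell_i+v^j_k)$, because they are related to one another exactly by the index transpositions being averaged. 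Hence $\widehat{(v^\ell_i-v^j_i-v^\ell_k-w)}=-\hat v$, and therefore $\widehat{R}_{i\bar j k\bar\ell}=\widehat{R^b}_{i\bar j k\bar\ell}+\hat v$, completing the proof.
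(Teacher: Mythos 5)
Your proposal is correct and takes essentially the same route as the paper, which proves the lemma exactly by taking the $(1,1)$-part of $\Theta^b=d\theta^b-\theta^b\wedge\theta^b$ (following \cite{ChenZ2}), i.e.\ your computation of $\Theta^b-\Theta=\big(d\gamma-\theta^b\wedge\gamma-\gamma\wedge\theta^b\big)+\gamma\wedge\gamma$ with $\gamma=\theta^b-\theta$; your bookkeeping checks out, as the coframe piece indeed supplies $-v^j_i-v^\ell_k$, the $\gamma\wedge\gamma$ piece gives $v^\ell_i-w$, and each of $v^j_i,v^\ell_i,v^\ell_k,v^j_k$ symmetrizes to $\hat v$. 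The Bismut-normal unitary frame is a legitimate convenience (since $\nabla^b$ preserves $g$ and $J$, its connection form is skew-Hermitian and can be gauged to vanish at a point), and since both sides of the identity are tensorial, verifying it pointwise in such a frame suffices.
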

For our later proofs, which rely on the classification result for balanced BTP threefolds in \cite{ZhaoZ25}, we will also need the following result from \cite{ChenZ2}:
\begin{lemma}
Let $(M^n,g)$ be a BTP manifold, then under any local unitary frame  $e$ it holds that
\begin{equation} \label{eq:Rbhat}
 \widehat{ R}^b_{i\bar{j}k\bar{\ell}}  = R^b_{i\bar{j}k\bar{\ell}} + \frac{1}{2}(w + v^j_i +  v^{\ell}_k - v^{\ell}_i - v^j_k),\ \ \ \ \ \forall \ 1\leq i,j,k,\ell \leq n.
\end{equation}
\end{lemma}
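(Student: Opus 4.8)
The plan is to derive \eqref{eq:Rbhat} by feeding the BTP hypothesis into the two identities of Lemma~\ref{lemma2} and then supplying one further input, the first Bianchi identity of the Chern connection. Since $\nabla^b T=0$ on a BTP manifold, the covariant-derivative terms $T^\ell_{ik;\bar j}$ and $\overline{T^k_{j\ell;\bar i}}$ in the first identity of Lemma~\ref{lemma2} vanish, leaving the algebraic relation
\[
R^b_{i\bar jk\bar\ell}=R_{i\bar jk\bar\ell}+v^\ell_i-v^j_i-v^\ell_k-w .
\]
Substituting this and the second identity $\widehat R^b_{i\bar jk\bar\ell}=\widehat R_{i\bar jk\bar\ell}-\widehat v$ into the desired formula, and using $4\widehat v=v^j_i+v^\ell_k+v^\ell_i+v^j_k$, a short bookkeeping of the contractions in \eqref{eq:vandw} shows that \eqref{eq:Rbhat} is equivalent to the symmetrization formula for the \emph{Chern} curvature
\[
\widehat R_{i\bar jk\bar\ell}=R_{i\bar jk\bar\ell}+\tfrac34 v^\ell_i-\tfrac14 v^j_i-\tfrac14 v^\ell_k-\tfrac14 v^j_k-\tfrac12 w .
\]
Hence it suffices to prove this Chern-side identity.

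By the definition of the symmetrization, $\widehat R_{i\bar jk\bar\ell}-R_{i\bar jk\bar\ell}$ is a fixed linear combination of the three ``swap differences'' $R_{k\bar ji\bar\ell}-R_{i\bar jk\bar\ell}$ (interchanging the holomorphic indices $i\leftrightarrow k$), $R_{i\bar\ell k\bar j}-R_{i\bar jk\bar\ell}$ (interchanging $\bar\jmath\leftrightarrow\bar\ell$), and the double swap. I would obtain each of these from the first Bianchi identity of the Chern connection: extracting the $(2,1)$-component of the torsion structure equation $d\tau=-\,{}^t\theta\wedge\tau+{}^t\Theta\wedge\varphi$ expresses the holomorphic swap difference as a single Chern covariant derivative of the torsion (schematically $T^\ell_{ik,\bar j}$) together with quadratic contractions of the torsion, and conjugation yields the anti-holomorphic swap. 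The second use of the BTP hypothesis enters here: writing $\nabla=\nabla^b-\gamma$ with $\gamma$ as above, the condition $\nabla^bT=0$ converts every Chern covariant derivative of $T$ into a quadratic expression $\gamma\!\cdot\!T$, so that all three swap differences become purely algebraic in the torsion.

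The final step is to average the three algebraic swap differences and collect terms. In contrast with the symmetrization of the first identity of Lemma~\ref{lemma2}, where the derivative terms cancel by the antisymmetry $T^\ell_{ik}=-T^\ell_{ki}$, here the converted $\gamma\!\cdot\!T$ contributions survive and must be merged with the quadratic terms coming directly from the Bianchi identity; the assertion is that they combine into exactly $\tfrac34 v^\ell_i-\tfrac14 v^j_i-\tfrac14 v^\ell_k-\tfrac14 v^j_k-\tfrac12 w$. I expect this bookkeeping to be the main obstacle: one must track the holomorphic versus anti-holomorphic derivative directions, the reality relations among the contractions in \eqref{eq:vandw}, and the precise coefficients produced by the $\gamma$-conversion, so that everything assembles into the stated coefficients. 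An alternative route, avoiding the Chern-to-Bismut conversion altogether, is to apply the first Bianchi identity of the Bismut connection directly: for a BTP metric it is already algebraic, being governed by $T^b\big(T^b(\cdot,\cdot),\cdot\big)$, at the cost of a more delicate type decomposition of this cubic torsion term.
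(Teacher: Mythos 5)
First, a point of reference: the paper does not actually prove this lemma --- it is imported verbatim from \cite{ChenZ2} ("we will also need the following result from \cite{ChenZ2}"), so there is no in-paper argument to measure you against, and your attempt has to stand on its own.

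Your reduction is correct as far as it goes. Since $\nabla^bT=0$ kills the terms $T^{\ell}_{ik;\bar j}$ and $\overline{T^k_{j\ell;\bar i}}$, the first identity of Lemma~\ref{lemma2} becomes $R^b_{i\bar jk\bar\ell}=R_{i\bar jk\bar\ell}+v^{\ell}_i-v^j_i-v^{\ell}_k-w$, and combining this with $\widehat R^b=\widehat R-\hat v$ and $4\hat v=v^j_i+v^{\ell}_k+v^{\ell}_i+v^j_k$ does convert \eqref{eq:Rbhat} into the Chern-side identity $\widehat R_{i\bar jk\bar\ell}=R_{i\bar jk\bar\ell}+\tfrac34 v^{\ell}_i-\tfrac14 v^j_i-\tfrac14 v^{\ell}_k-\tfrac14 v^j_k-\tfrac12 w$; I checked this bookkeeping and it passes the obvious consistency tests (symmetrizing both sides gives $0=0$, and both sides vanish when $i=k$, $j=\ell$). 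The route you then propose --- express the swap differences $R_{k\bar ji\bar\ell}-R_{i\bar jk\bar\ell}$ etc.\ via the Chern first Bianchi identity as covariant derivatives of $T$, and use $\nabla=\nabla^b-\gamma$ with $\nabla^bT=0$ to turn them into quadratic torsion expressions --- is viable in principle.

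The gap is that the decisive step is never performed: the assembly of the $\gamma\!\cdot\!T$ contributions into exactly $\tfrac34 v^{\ell}_i-\tfrac14 v^j_i-\tfrac14 v^{\ell}_k-\tfrac14 v^j_k-\tfrac12 w$ is asserted, and you yourself flag it as "the main obstacle." Since the entire content of the lemma is precisely these coefficients, what you have is a correct reduction plus a plan, not a proof. I would also point out that the "alternative route" you mention in your last sentence is in fact the efficient one and essentially eliminates the bookkeeping: for a BTP metric the Bismut curvature satisfies the pair symmetry $R^b_{i\bar jk\bar\ell}=R^b_{k\bar\ell i\bar j}$ (see \cite{ZhaoZ24}; it is used without comment in the proof of Theorem~\ref{thm3}), which immediately gives
\begin{equation*}
\widehat R^b_{i\bar jk\bar\ell}-R^b_{i\bar jk\bar\ell}=\tfrac12\bigl(R^b_{k\bar ji\bar\ell}-R^b_{i\bar jk\bar\ell}\bigr),
\end{equation*}
since $R^b_{k\bar\ell i\bar j}=R^b_{i\bar jk\bar\ell}$ and $R^b_{i\bar\ell k\bar j}=R^b_{k\bar ji\bar\ell}$. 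This explains the overall factor $\tfrac12$ in \eqref{eq:Rbhat} at once, and the remaining single difference $R^b_{k\bar ji\bar\ell}-R^b_{i\bar jk\bar\ell}=w+v^j_i+v^{\ell}_k-v^{\ell}_i-v^j_k$ comes from the first Bianchi identity of $\nabla^b$, which is purely algebraic when the torsion is parallel. Reorganizing your argument around that symmetry would close the gap with far less index-chasing than the Chern-side computation you sketch.
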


Next, we consider Lie-Hermitian manifolds, which are compact quotients of Lie groups by their discrete subgroups, where both the complex structures and metrics are left-invariant. In the past  few decades, the Hermitian geometry of Lie-Hermitian manifolds have been extensively studied from various aspects by many people, including A. Gray, S. Salamon, L. Ugarte, A. Fino, L. Vezzoni, F. Podest\`a, D. Angella, A. Andrada, Lauret, and others. There is a large amount of literature on this topic, and here we mention only a small sample: \cite{AU}, \cite{CFGU},  \cite{FP3}, \cite{FinoTomassini09},  \cite{GiustiPodesta}, \cite{Salamon}, \cite{Ugarte}, \cite{WYZ}, \cite{ZZ-Crelle}, \cite{ZZ-JGP}. For broader discussions on non-K\"ahler Hermitian geometry, see works such as \cite{AI}, \cite{AT}, \cite{Fu}, \cite{STW}, \cite{Tosatti} and the references therein.

Let $M=G/\Gamma$ be a compact quotient of a Lie group $G$ by a discrete subgroup $\Gamma \subseteq G$, equipped with (the descend of) a left-invariant complex structure $J$ on $G$.  Let ${\mathfrak g}$ be the Lie algebra of $G$, then $J$ corresponds to a complex structure (which for convenience we will still denote by $J$) on  ${\mathfrak g}$. Similarly, any left-invariant metric $g$ on $G$ compatible with $J$ will correspond to an inner product $g=\langle \cdot , \cdot \rangle$ on ${\mathfrak g}$ such that $\langle Jx,Jy\rangle = \langle x,y\rangle$, for any $x,y\in {\mathfrak g }$. Here again for convenience we use the same letter to denote the corresponding metric on the Lie algebra.

As in \cite{GuoZ2}, let ${\mathfrak g}^{\mathbb C}$ be the complexification of ${\mathfrak g}$, and write ${\mathfrak g}^{1,0}= \{ x-\sqrt{-1}Jx \mid x \in {\mathfrak g}\} \subseteq {\mathfrak g}^{\mathbb C}$. The integrability condition (\ref{integrability}) means that ${\mathfrak g}^{1,0}$ is a complex Lie subalgebra of ${\mathfrak g}^{\mathbb C}$. Extend $g=\langle \cdot , \cdot \rangle $ bi-linearly over ${\mathbb C}$, and let $e=\{ e_1, \ldots , e_n\}$ be a basis of ${\mathfrak g}^{1,0}$, which will be called a {\em frame} of $({\mathfrak g},J)$ or $({\mathfrak g}, J,g)$.  Denote by $\varphi$ the coframe dual to $e$, namely, a basis of the dual vector space $({\mathfrak g}^{1,0})^{\ast}$ such that $\varphi_i(e_j)=\delta_{ij}$, $\forall$ $1\leq i,j\leq n$. We will write
\begin{equation} \label{CandD}
C^j_{ik} = \varphi_j( [e_i,e_k] ), \ \ \ \ \ \  D^j_{ik} = \overline{\varphi}_i( [\overline{e}_j, e_k] )
\end{equation}
for the structure constants, which is  equivalent to
\begin{equation} \label{CandD2}
[e_i,e_j] = \sum_k C^k_{ij}e_k, \ \ \ \ \ [e_i, \overline{e}_j] = \sum_k \big( \overline{D^i_{kj}} e_k - D^j_{ki} \overline{e}_k \big) .
\end{equation}
In dual terms, the above is also equivalent to the (first) structure equation:
\begin{equation} \label{eq:structure}
d\varphi_i = -\frac{1}{2} \sum_{j,k} C^i_{jk} \,\varphi_j\wedge \varphi_k - \sum_{j,k} \overline{D^j_{ik}} \,\varphi_j \wedge \overline{\varphi}_k, \ \ \ \ \ \ \forall \  1\leq i\leq n.
\end{equation}
Differentiate the above, we get the  first Bianchi identity, which is equivalent to the Jacobi identity in this case:
\begin{equation*}
\left\{  \begin{split}  \sum_r \big( C^r_{ij}C^{\ell}_{rk} + C^r_{jk}C^{\ell}_{ri} + C^r_{ki}C^{\ell}_{rj} \big) \ = \ 0,  \hspace{3.2cm}\\
 \sum_r \big( C^r_{ik}D^{\ell}_{jr} + D^r_{ji}D^{\ell}_{rk} - D^r_{jk}D^{\ell}_{ri} \big) \ = \ 0, \hspace{3cm} \\
 \sum_r \big( C^r_{ik}\overline{D^r_{j \ell}}  - C^j_{rk}\overline{D^i_{r \ell}} + C^j_{ri}\overline{D^k_{r \ell}} -  D^{\ell}_{ri}\overline{D^k_{j r}} +  D^{\ell}_{rk}\overline{D^i_{jr}}  \big) \ = \ 0,  \end{split} \right.
\end{equation*}
for any $1\leq i,j,k,\ell\leq n$. When $G$ has a compact quotient, it (or equivalently, its Lie algebra ${\mathfrak g}$) is necessarily unimodular, that is, $\mbox{tr}(ad_x)=0$ for any $x\in {\mathfrak g}$. In terms of structure constants, we have
\begin{equation*}
{\mathfrak g} \ \, \mbox{is unimodular}  \ \ \Longleftrightarrow  \ \ \sum_r \big( C^r_{ri} + D^r_{ri}\big) =0 , \, \ \forall \ i.
\end{equation*}
Note that so far we did not assume $e$ to be unitary. When $e$ is unitary, namely, when $\langle e_i, \overline{e}_j\rangle = \delta_{ij}$ for any $1\leq i,j\leq n$, then the formula for Chern connection form, the Chern torsion components $T^j_{ik}$ and Chern curvature components $R_{i\bar{j}k\bar{\ell}}$  under $e$ are particularly simple (\cite{GuoZ2, LZ}):
\begin{eqnarray}
 &&  \theta_{ij} \ = \ \sum_{k=1}^n \big( D^j_{ik}\varphi_k  -\overline{D^i_{jk}} \overline{\varphi_k} \big), \ \ \ \ 
  T^j_{ik}  =   - C^j_{ik} -   D^{j}_{i k} + D^{j}_{k i} , \ \ \ \forall \ 1\leq i,j,k \leq n, \label{torsion} \\
 && R_{i\bar{j}k\bar{\ell}}\ = \   \sum_{s=1}^n \big( D^s_{ki}\overline{D^s_{\ell j}} - D^{\ell}_{si}\overline{D^k_{s j}} - D^j_{si}\overline{D^k_{ \ell s}} - \overline{D^i_{sj}} D^{\ell}_{k s} \big), \  \ \ \forall \ 1\leq i,j,k,\ell \leq n, \label{curvature}
\end{eqnarray}
In particular, we have
\begin{eqnarray}
&&  R_{i\overline{i}i\overline{i}} \ = \  \sum_{r=1}^n \big( |D^r_{ii}|^2 - |D^i_{ri}|^2 - 2 {\mathfrak R}\mbox{e} \{  D^i_{ri} \overline{D^i_{ir}} \} \big), \ \ \ \forall \ 1\leq i \leq n, \label{HD} \\
&& R_{i\overline{i}s\overline{s}} \ = \ \sum_{r=1}^n \big( |D^r_{si}|^2 - |D^s_{ri}|^2 - 2 {\mathfrak R}\mbox{e} \{  D^i_{ri} \overline{D^s_{sr}} \} \big), \ \ \ \forall \ 1\leq i, s \leq n, \label{chun3} \\
&& 
\widehat{R}_{i\overline{i}k\overline{k}} \ =  \ \frac{1}{4}\sum_{r=1}^n \big( |D^r_{ki}+D^r_{ik}|^2 - |D^k_{ri}|^2 -|D^i_{rk}|^2 - 2 {\mathfrak R}\mbox{e} \{ D^k_{rk} \overline{D^i_{ri} } +   D^i_{ri} \overline{D^k_{kr} }  + \label{RhatD} \\
&& \hspace{1.4cm} + \,D^k_{rk} \overline{D^i_{ir} } + D^i_{rk} \overline{D^i_{kr} } + D^k_{ri} \overline{D^k_{ir} }\} \big), \ \ \ \ \ \forall \  1\leq i,k \leq n. \nonumber
\end{eqnarray}

\vspace{0.3cm}

\section{Solvable Lie algebras with complex commutators}

In this section, we will prove case (ii) of Theorem \ref{thm}. Let $G$ be a Lie group equipped with a left-invariant metric $g=\langle \cdot , \cdot \rangle $ and a compatible left-invariant complex structure $J$. First let us recall the famous result of Salamon \cite[Theorem 1.3]{Salamon} for nilpotent groups: 

\begin{theorem} [Salamon] Let $G$ be a nilpotent Lie group of dimension $2n$ equipped with a left invariant complex structure. Then there exists a coframe $\varphi =\{ \varphi_1, \ldots , \varphi_n\}$ of left invariant $(1,0)$-forms on $G$ such that $$ d\varphi_1 =0, \ \ \ d\varphi_i = {\mathcal I} \{\varphi_1, \ldots , \varphi_{i-1}\} , \ \ \ \forall \ 2\leq i\leq n, $$
where ${\mathcal I}$ stands for the ideal in  exterior algebra of the complexified cotangent bundle generated by those $(1,0)$-forms.
\end{theorem}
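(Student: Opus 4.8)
The plan is to build the desired coframe from an intrinsic increasing filtration of the space $V:=(\mathfrak g^{1,0})^{\ast}$ of left-invariant $(1,0)$-forms, and then to show that this filtration exhausts $V$ precisely because $\mathfrak g$ is nilpotent. Throughout I write $\mathfrak g^{0,1}=\overline{\mathfrak g^{1,0}}$, and for a subspace $S$ I let $\mathrm{Ann}(S)$ denote its annihilator (inside $(\mathfrak g^{\mathbb C})^{\ast}$ if $S\subseteq\mathfrak g^{\mathbb C}$, and inside $\mathfrak g^{\mathbb C}$ if $S\subseteq(\mathfrak g^{\mathbb C})^{\ast}$). First I would set $F_0=0$ and define inductively
\[
F_t=\{\alpha\in V:\ d\alpha\in \mathcal I(F_{t-1})\},
\]
where $\mathcal I(F_{t-1})$ is the degree-two part of the ideal generated by $F_{t-1}$ in $\Lambda^{\bullet}(\mathfrak g^{\mathbb C})^{\ast}$. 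Since $F_{t-1}\subseteq F_t$ forces $\mathcal I(F_{t-1})\subseteq\mathcal I(F_t)$, the $F_t$ form an increasing chain, with $F_1$ being the space of closed $(1,0)$-forms.

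The first substantive step is to translate the membership $d\alpha\in\mathcal I(F_{t-1})$ into a bracket condition. Setting $N_t:=\mathrm{Ann}(F_t)\subseteq\mathfrak g^{\mathbb C}$, a dimension count shows that a two-form lies in $F_{t-1}\wedge(\mathfrak g^{\mathbb C})^{\ast}$ if and only if it vanishes on $N_{t-1}\times N_{t-1}$; combined with $d\alpha(X,Y)=-\alpha([X,Y])$ this gives $F_t=V\cap\mathrm{Ann}\big([N_{t-1},N_{t-1}]\big)$. Since $V=\mathrm{Ann}(\mathfrak g^{0,1})$, taking annihilators yields the clean recursion
\[
N_0=\mathfrak g^{\mathbb C},\qquad N_t=\mathfrak g^{0,1}+[N_{t-1},N_{t-1}].
\]
Here integrability enters decisively: because $\mathfrak g^{0,1}$ is a subalgebra of $\mathfrak g^{\mathbb C}$, an easy induction shows each $N_t$ is a subalgebra containing $\mathfrak g^{0,1}$, so the $N_t$ decrease and, dually, the $F_t$ increase. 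Exhaustion of $V$ is therefore equivalent to the $N_t$ shrinking down to $\mathfrak g^{0,1}$.

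The crux, and the only place nilpotency is used, is to prove $N_k=\mathfrak g^{0,1}$ for $k$ large. Let $D^0=\mathfrak g^{\mathbb C}$ and $D^t=[\mathfrak g^{\mathbb C},D^{t-1}]$ be the descending central series. Using the recursion together with $[\mathfrak g^{0,1},\mathfrak g^{0,1}]\subseteq\mathfrak g^{0,1}$, I would show by induction that $N_t\subseteq\mathfrak g^{0,1}+D^t$, since
\[
[N_{t-1},N_{t-1}]\subseteq[\mathfrak g^{0,1}+D^{t-1},\mathfrak g^{0,1}+D^{t-1}]\subseteq\mathfrak g^{0,1}+D^t.
\]
As $\mathfrak g$, hence $\mathfrak g^{\mathbb C}$, is nilpotent, $D^k=0$ for some $k$, whence $N_k=\mathfrak g^{0,1}$ and $F_k=\mathrm{Ann}(\mathfrak g^{0,1})=V$. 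I expect this descent, and the correct dualization of the holomorphic-only ideal $\mathcal I(F_{t-1})$, to be the main obstacle: the ideal is generated by the lower $(1,0)$-forms alone yet is wedged with arbitrary $(0,1)$-forms, and it is precisely the dimension-count identity above that keeps the dualized bracket condition clean despite this asymmetry.

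Finally, to produce the coframe I would choose a basis $\varphi_1,\dots,\varphi_n$ of $V$ adapted to the filtration, so that $\varphi_1,\dots,\varphi_{\dim F_t}$ spans $F_t$ for every $t$. If $t$ is minimal with $\varphi_i\in F_t$, then $\varphi_i\notin F_{t-1}$ forces $\dim F_{t-1}\le i-1$, so $F_{t-1}\subseteq\mathrm{span}\{\varphi_1,\dots,\varphi_{i-1}\}$ and hence $d\varphi_i\in\mathcal I(F_{t-1})\subseteq\mathcal I\{\varphi_1,\dots,\varphi_{i-1}\}$. In particular the forms spanning the first nonzero step $F_{t_0}$, whose predecessor $F_{t_0-1}$ is $0$, are closed, so $d\varphi_1=0$. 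This gives exactly the asserted triangular structure.
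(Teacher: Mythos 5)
The paper does not prove this statement; it is quoted verbatim from Salamon \cite[Theorem 1.3]{Salamon}, so there is no internal proof to compare against. Your argument is correct and is essentially Salamon's original one: the ascending filtration $F_t$ dualizes, via $d\alpha(X,Y)=-\alpha([X,Y])$ and the identity $W\wedge U^{\ast}=\{\omega:\omega|_{\mathrm{Ann}(W)\times\mathrm{Ann}(W)}=0\}$, to the recursion $N_t=\mathfrak g^{0,1}+[N_{t-1},N_{t-1}]$, integrability enters exactly through $[\mathfrak g^{0,1},\mathfrak g^{0,1}]\subseteq\mathfrak g^{0,1}$, and nilpotency forces $N_t\subseteq\mathfrak g^{0,1}+D^t$ to collapse to $\mathfrak g^{0,1}$, whence $F_k=V$ and an adapted basis gives the triangular coframe. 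Two small presentational points: the sentence asserting that the $F_t$ increase reads circularly as written (state it as an induction: $F_0\subseteq F_1$, and $F_{t-1}\subseteq F_t$ implies $\mathcal I(F_{t-1})\subseteq\mathcal I(F_t)$, hence $F_t\subseteq F_{t+1}$); and the subalgebra property of $N_t$ is not actually needed for the descent, since monotonicity already follows from that of the $F_t$ and the containment $N_t\subseteq\mathfrak g^{0,1}+D^t$ is proved directly. Neither affects correctness.
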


Clearly, one can choose the above coframe $\varphi$ so that it is also unitary. In terms of the structure constants $C$ and $D$ given by (\ref{CandD}), this means
\begin{equation}
C^j_{ik}=0  \ \ \ \mbox{unless} \ \ j>i \ \mbox{or} \ j>k; \ \ \ \ \ D^j_{ik}=0  \ \ \ \mbox{unless} \ \ i>j.  \label{Salamon}
\end{equation}
Let $e$ be a unitary frame dual to $\varphi$ satisfying (\ref{Salamon}). We will call $e$ a {\em Salamon frame} from now on. Under such a frame one always has
\begin{equation}
D^n_{ik} = 0, \ \ \ \forall \  1\leq i,k\leq n.  \label{Duppern}
\end{equation}

Now suppose that $G$ is a unimodular Lie group equipped with a left-invariant complex structure $J$ and a compatible left-invariant metric $g$. Denote by ${\mathfrak g}$ its Lie algebra and ${\mathfrak g}'=[{\mathfrak g},{\mathfrak g}]$ its commutator, also denote by $J$, $g=\langle \cdot , \cdot \rangle$ the corresponding complex structure and metric on ${\mathfrak g}$. 

{\em We assume that ${\mathfrak g}' +  J{\mathfrak g}'$ is nilpotent.}

Note that  when ${\mathfrak g}$ is nilpotent (case (i) of Theorem \ref{thm}), or when ${\mathfrak g}$ is solvable with $J{\mathfrak g}'={\mathfrak g}'$ (case (ii) of Theorem \ref{thm}), or when ${\mathfrak g}$ is solvable and $J{\mathfrak n}={\mathfrak n}$ where ${\mathfrak n}$ is the nilradical of ${\mathfrak g}$, then ${\mathfrak g}' +  J{\mathfrak g}'$ is automatically nilpotent, as it is well-known that the commutator of a solvable Lie algebra is always nilpotent. Under the above assumption,  ${\mathfrak g}'+ J {\mathfrak g}'$ is a nilpotent Lie algebra. Denote its (real) dimension by $2r$, and the (real) dimension of ${\mathfrak g}$ by $2n$. 

A unitary frame $e$ of ${\mathfrak g}$ is called {\em admissible} if $\{ e_1, \ldots , e_r\}$ is a Salamon frame of ${\mathfrak g}'+  J{\mathfrak g}'$, equipped with the restriction complex structure and metric. In particular,   ${\mathfrak g}'+J{\mathfrak g}'$ is spanned by $\{ e_i+\overline{e}_i, \sqrt{-1}(e_i - \overline{e}_i)\}_{1\leq i\leq r}$. Throughout this section, we will always make the following convention on the range of indices:
$$ 1\leq i,j,\cdots \leq r, \ \ \ \ \ \ r\!+\!1\leq \gamma, \eta , \cdots \leq n, \ \ \ \ \ \ 1\leq a,b,\cdots \leq n. $$
From (\ref{CandD2}) and the definition of ${\mathfrak g}'$, we obtain \begin{equation}  \label{eq:res1}
C^{\gamma}_{ab}=D^{a}_{\gamma b}=0, \ \ \ \ \forall \ r\!+\!1\leq \gamma \leq n, \ \ \forall \ 1\leq a,b\leq n.
\end{equation}
Since ${ e_1, \ldots, e_r }$ is a Salamon frame for ${\mathfrak g}'+  J{\mathfrak g}'$, we have
\begin{equation}  \label{eq:res2}
C^j_{ik}=0  \ \  \mbox{unless} \ j>i \ \mbox{or} \ j>k; \ \ \ \ D^j_{ik}=0 \  \ \mbox{unless} \ i>j, \ \ \ \forall \ 1\leq i,j,k\leq r.
\end{equation}
This will be a key property used repeatedly. The readers are referred to \cite{LZ} for a more detailed discussion on this. 

\begin{lemma} \label{lemmacd=0}
Let $({\mathfrak g}, J,g)$ be a unimodular Lie algebra equipped with a Hermitian structure such that ${\mathfrak g}'+J{\mathfrak g}'$ is nilpotent and the Chern connection of $g$ has constant mixed curvature: $\mathcal{C}_{\alpha, \beta}=c$ where $\beta \neq 0$. Then $c=0$ and $D^{\gamma}_{j\eta}=D^j_{ik}=D^{\gamma}_{ij}=0$, for any $1\leq i,j,k\leq r,\,r\!+\!1\leq \gamma , \eta \leq n$.
\end{lemma}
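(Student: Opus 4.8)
The plan is to combine the Salamon-frame constraints \eqref{eq:res2} with the ideal relations \eqref{eq:res1} and unimodularity to compute the diagonal Chern curvatures that enter the pointwise identities \eqref{chun1} and \eqref{chun2}, and to exploit the sign-definiteness that the nilpotent structure forces on them. The first step is to record two structural vanishings. Unimodularity together with \eqref{eq:res2} forces $P_i:=\sum_a D^a_{ai}=0$ for every inside index $i$, since the only a priori surviving terms $D^j_{ji}$ are killed by the Salamon condition. Feeding this into \eqref{chun3} and \eqref{HD}, I would show that $R_{\gamma\bar\gamma}=0$ for every outside $\gamma$, that $R_{r\bar r}=0$ for the top inside index $r$, and that the two corresponding diagonal holomorphic sectional curvatures are sign-definite: one computes $R_{\gamma\bar\gamma\gamma\bar\gamma}=-\sum_i|D^\gamma_{i\gamma}|^2\le 0$, while $R_{r\bar r r\bar r}=\sum_a|D^a_{rr}|^2\ge 0$, the latter because $D^r_{ar}=0$ for all $a$ by \eqref{eq:res1}--\eqref{eq:res2} (here $a$ is the summation index of \eqref{HD}, not the fixed top index $r$).

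With these in hand, $c=0$ follows by a sign comparison. Evaluating \eqref{chun1} at $i=r$ and at $i=\gamma$ and using that the Ricci terms vanish gives $c=\beta R_{r\bar r r\bar r}$ and $c=\beta R_{\gamma\bar\gamma\gamma\bar\gamma}$, hence $c/\beta\ge 0$ and $c/\beta\le 0$ simultaneously whenever both a top inside index ($r\ge 1$) and an outside index ($r<n$) are present; thus $c=0$. If ${\mathfrak g}$ is abelian the statement is trivial, and if ${\mathfrak g}'+J{\mathfrak g}'={\mathfrak g}$ (so there is no outside index) I would instead use the bottom inside index $1$, for which \eqref{eq:res1}--\eqref{eq:res2} give $R_{1\bar 1}=0$ and $R_{1\bar 1 1\bar 1}=-\sum_{j>1}|D^1_{j1}|^2\le 0$ (the positive contributions are absent precisely because there are no outside output indices), again forcing $c=0$.

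Once $c=0$, I would read off the structure constants. The outside diagonal equation $c=\beta R_{\gamma\bar\gamma\gamma\bar\gamma}=0$ immediately gives $D^\gamma_{i\gamma}=0$; substituting $c=0$ into \eqref{chun2} for a pair of outside indices yields $\widehat R_{\gamma\bar\gamma\eta\bar\eta}=0$, and \eqref{RhatD} expands this as $-\tfrac14\sum_i(|D^\eta_{i\gamma}|^2+|D^\gamma_{i\eta}|^2)=0$ (the cross term drops because $D^\gamma_{i\gamma}=D^\eta_{i\eta}=0$), hence $D^\gamma_{j\eta}=0$. For the remaining constants I would combine \eqref{chun1} at $i$ and at $k$ with \eqref{chun2} at $(i,k)$ to obtain the clean relation $4\widehat R_{i\bar i k\bar k}=R_{i\bar i i\bar i}+R_{k\bar k k\bar k}$ for all inside $i\ne k$; summing the vanishing combination $4\widehat R_{i\bar i k\bar k}-R_{i\bar i i\bar i}-R_{k\bar k k\bar k}$ over all inside pairs and expanding via \eqref{RhatD} and \eqref{HD} should produce a sign-definite sum of squares in $D^j_{ik}$ and $D^\gamma_{ij}$ (note that $D^\gamma_{ij}$ enters \eqref{RhatD} through its $r=\gamma$ contributions), which then forces $D^j_{ik}=D^\gamma_{ij}=0$.

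The delicate point is this last step: one must verify that the summed combination is genuinely a \emph{definite} quadratic form in the surviving constants rather than an indefinite one. As the naive single summation $\sum_{i,\gamma}\widehat R_{i\bar i\gamma\bar\gamma}$ already illustrates, individual summations tend to produce indefinite expressions in which the allowed constants $D^i_{j\gamma}$ and the forbidden ones cannot be separated, so the real work lies in the bookkeeping of which monomials survive \eqref{eq:res1}--\eqref{eq:res2} and how they reorganize into squares. This is the analogue, in the present mixed-curvature setting, of the nilpotent computation carried out by Li--Zheng \cite{LZ}, and it is where I expect the main obstacle to be.
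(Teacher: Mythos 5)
Your derivation of $c=0$ and of the vanishings $D^{\gamma}_{i\gamma}=0$, $D^{\gamma}_{j\eta}=0$ is correct and matches the paper's route almost exactly: the paper likewise computes $R_{\gamma\bar\gamma\gamma\bar\gamma}=-\sum_{s\le r}|D^{\gamma}_{s\gamma}|^2\le 0$ and $R_{r\bar r r\bar r}=\sum_s|D^s_{rr}|^2\ge 0$, notes that all diagonal Ricci entries vanish (your $P_i=\sum_a D^a_{ai}=0$, which incidentally follows from \eqref{eq:res1}--\eqref{eq:res2} alone, without unimodularity), and concludes $c=0$ from \eqref{chun1} by the same sign comparison. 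Your only genuine variation is obtaining $D^{\gamma}_{j\eta}=0$ from $\widehat{R}_{\gamma\bar\gamma\eta\bar\eta}=0$ via \eqref{RhatD}, where the paper instead rotates the outside frame vectors to get the sesquilinear identity $\sum_{\gamma,\eta}\overline{X}_{\gamma}X_{\eta}D^{\gamma}_{j\eta}=0$; both are fine.

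The gap is in your final step. You propose to sum the vanishing combination $4\widehat{R}_{i\bar ik\bar k}-R_{i\bar ii\bar i}-R_{k\bar kk\bar k}$ over all inside pairs and hope the result is a definite quadratic form in the remaining constants. This cannot work as stated: by \eqref{HD} and \eqref{RhatD} the diagonal quantities $R_{i\bar ii\bar i}$ and $\widehat{R}_{i\bar ik\bar k}$ see the constants $D^{\gamma}_{ij}$ only through the symmetrized combinations $|D^{\gamma}_{ki}+D^{\gamma}_{ik}|^2$, so no linear combination of these equations alone can be definite in all of the $D^{\gamma}_{ij}$ --- the antisymmetric part is invisible to them, and killing it requires additional input (the Jacobi identity together with the strict triangularity \eqref{eq:res2} of the Salamon frame). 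This is precisely why the paper does not run a one-shot global summation but instead isolates the top-index identity \eqref{eq:iirr} (the case $k=r$ of \eqref{eq:iikk}) and then invokes the descending induction on the Salamon filtration from Huang--Zheng \cite{HuangZ} and Li--Zheng \cite{LZ} to reach \eqref{eq:goal}. You correctly identify this as the main obstacle and point to the right references, but the mechanism you sketch (global positivity) is not the one that closes it; the argument you need is the index-by-index peeling, and without it the lemma's conclusion $D^j_{ik}=D^{\gamma}_{ij}=0$ is not established.
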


\begin{proof}
Since $D^*_{\gamma *}=0$, by (\ref{HD}), (\ref{chun3}) and (\ref{eq:res2}) we have $\sum_{t=1}^n R_{\gamma\bar \gamma t \bar t}=0,$ $\sum_{t=1}^n R_{i\bar i t \bar t}=0$, and
$$R_{i\bar i i\bar i}=\sum_{s=1}^n  |D^s_{ii}|^2 - \sum_{s=1}^r |D^i_{si}|^2, \ \ \ \ \ R_{\gamma\bar \gamma \gamma \bar \gamma}=- \sum_{s=1}^r |D^\gamma_{s\gamma}|^2,$$
for any $1\leq i\leq r, \ r+1\leq \gamma \leq n.$ Thus, by (\ref{chun1}) we have
$$ \beta R_{i\bar i i\bar i}=\beta \big(\sum_{s=1}^n  |D^s_{ii}|^2 - \sum_{s=1}^r |D^i_{si}|^2 \big) = c,$$
for each $1\leq i \leq r$. In particular,
$$c=\beta R_{r\overline{r}r\overline{r}} = \beta \sum_{s=1}^r |D^s_{rr}|^2, \ \ \ \ \  \ c= \beta R_{\gamma\overline{\gamma}\gamma\overline{\gamma}} =- \beta \sum_{s=1}^r |D^\gamma_{s\gamma}|^2, \ \ \ \ \forall \ r+1\leq \gamma \leq n.  $$
Therefore we deduce $c=0$, $ D^*_{r r}=0$, and
\begin{eqnarray}
&& D^\gamma_{i \gamma} \ = \  0, \ \ \ \ \ \forall \ 1\leq i\leq r, \ r+1\leq \gamma \leq n,\label{eq:alpha} \ \ \\
&& \sum_{s=1}^n  |D^s_{ii}|^2 \ = \  \sum_{s=1}^r |D^i_{si}|^2, \ \ \ \ \  \forall \ 1\leq i \leq r. \label{eq:i}
\end{eqnarray}
Note that if we make any unitary change on $\{ e_{r\!+\!1},\ldots , e_n\}$, then formula (\ref{eq:res1}) and (\ref{eq:res2}) would not be affected and $e$ would remain to be an admissible frame. So formula (\ref{eq:alpha}) actually gives us $D^{X}_{jX}=0$, for any $j$ and any $X=\sum_{\gamma =r\!+\!1}^n X_{\gamma}e_{\gamma}$. That is, $\sum_{\gamma ,\eta=r\!+\!1}^n \overline{X}_{\gamma} X_{\eta} D^{\gamma}_{j\eta }=0$. This leads to $D^{\gamma}_{j\eta}=0$, for any $j$ and any $\gamma$, $\eta$.

We have $\sum_{t=1}^n R_{i\bar i t \bar t}=0$, $\forall \ 1\leq i\leq r$. Fix any  $1\leq i<k\leq r$. By (\ref{chun2}) and $\beta \neq 0$ we obtain $ \widehat{R}_{i\overline{i}k \overline{k}}= 0$. Hence by (\ref{RhatD}) we get
\begin{equation}
 0 =  \sum_{s=1}^n |D^s_{ki}+D^s_{ik}|^2 -  \sum_{s=1}^r \big( |D^k_{si}|^2 + |D^i_{sk}|^2 + 2{\mathfrak R}\mbox{e} \{ D^i_{si}\overline{D^k_{sk}} + \overline{D^i_{sk}} D^i_{ks} \} \big). \label{eq:iikk}
\end{equation}
When deriving the above equality, we used the fact that $D^{\ast}_{\gamma \ast}=0$ for any $\gamma >r$ by (\ref{eq:res1}). Let $k=r$ in (\ref{eq:iikk}), and by (\ref{eq:res2}) we get
\begin{equation}  \label{eq:iirr}
 \sum_{s=1}^n |D^s_{ri}+D^s_{ir}|^2 \, = \, \sum_{s=1}^r \big(  |D^i_{sr}|^2 + 2{\mathfrak R}\mbox{e} \{  \overline{D^i_{sr}} D^i_{rs} \} \big) , \ \ \ \ \ \ \forall \ 1\leq i\leq r.
 \end{equation}
Now by the same argument as in the proof of Theorem $1$ in Huang-Zheng \cite{HuangZ} (see also the proof of Theorem E in \cite{LZ}), we end up with
\begin{equation}  \label{eq:goal}
D^{\gamma}_{j\eta}=D^j_{ik}=D^{\gamma}_{ij}=0,  \ \ \ \ \ \ \ \ \forall\ 1\leq i,j,k\leq r, \ \ \ \forall \ r\!+\!1\leq \gamma , \eta \leq n.
 \end{equation}
 This completes the proof of the lemma.
\end{proof}

Now we are ready to finish the proof of case (ii) of Theorem \ref{thm}, which is a special case of Theorem \ref{thm1b}.

\begin{proof}[{\bf Proof of Theorem \ref{thm1b}}]
Let $({\mathfrak g}, J,g)$ be a unimodular Lie algebra equipped with a Hermitian structure. Assume that ${\mathfrak g}'+J{\mathfrak g}'$ is nilpotent and the Chern connection of $g$ has constant mixed curvature: $\mathcal{C}_{\alpha, \beta}=c$, where $\beta \neq 0$. Then $c=0$ by Lemma \ref{lemmacd=0}. Our goal is  to show that the Chern curvature of $g$ must vanish identically: $R=0$. Let $e$ be an admissible frame of ${\mathfrak g}$ so that (\ref{eq:res1}) and (\ref{eq:res2}) hold. By (\ref{eq:goal}) the only possibly non-zero $D$ components are $ D^j_{i\gamma}$, where $ 1\leq i,j\leq r$ and $r\!+\!1 \leq \gamma \leq n$. In particular, $D^{\gamma}_{\ast\ast} = D^{\ast}_{\gamma \ast}=0$. So by (\ref{curvature}) we know that
\begin{equation} \label{eq:alphaat34}
 R_{\ast \bar{\ast} \gamma \bar{\ast}} =  R_{\ast \bar{\ast} \ast \bar{\gamma} }=0, \ \ \ \ \ \ \forall \ r\!+\!1 \leq \gamma \leq n.
 \end{equation}
Therefore $R_{a\bar{b}c\bar{d}}=0$ if at least three of the indices belong to $\{ r\!+\!1, \ldots , n\}$. Also by (\ref{curvature}) we have
$$ R_{a\bar{b}c\bar{d}} = \sum_{s=1}^n D^s_{ca} \overline{D^s_{db} } - \sum_{s=1}^n D^d_{sa} \overline{D^c_{sb} }, $$
since for the last two terms in the right hand side of (\ref{curvature})  the index $s$ cannot be simultaneously $>r$ and $\leq r$. From this  we obtain
$$ \sum_{t=1}^n R_{\gamma \bar{\eta}t\bar{t}} = 0, \ \ \ \ \ 
\sum_{t=1}^n R_{i \bar{j}t\bar{t}} =0, \ \ \ \ \ \ \forall \ 1\leq i,j\leq r, \ \ \forall \ r\!+\!1 \leq \gamma, \eta \leq n. $$
By (\ref{mix1}), we have $$4\beta\widehat{R}_{\gamma\bar{\eta}i\bar{j}}=4\beta\widehat{R}_{\gamma\bar{\eta}i\bar{j}}+
\alpha \big(\sum_{t=1}^n R_{\gamma\bar{\eta} t \bar{t}}\delta_{i{j}}+R_{i\bar{j} t\bar{t}}\delta_{\gamma{\eta}}\big)= 2c\big( \delta_{i j} \delta_{\gamma \eta} \big)=0, $$
for any $ 1\leq i,j\leq r$, $r\!+\!1 \leq \gamma, \eta \leq n$. If exactly two of the four indices belong to $\{ r\!+\!1, \ldots , n\}$, then by (\ref{eq:alphaat34}), we get
$$R_{\gamma \bar{\eta}i\bar{j}}=4\widehat{R}_{\gamma \bar{\eta}i\bar{j}}=0. $$
If only one of the four indices belongs to $\{ r\!+\!1, \ldots , n\}$, then it is
$$R_{\gamma \bar{j}k\bar{\ell}}=   \sum_{s=1}^n \big( D^s_{k\gamma } \overline{D^s_{\ell j} } -  D^{\ell}_{s\gamma } \overline{D^k_{sj} }\big).$$
This equals to zero since for each of the two terms on the right-hand side, the second factor is zero by (\ref{eq:goal}). Similarly, when all four indices are in $\{ 1, \ldots , r\}$, $R_{i\bar{j}k\bar{\ell}}=0$ by (\ref{eq:goal}). Therefore, we have shown that $R=0$, completing the proof of Theorem \ref{thm1b}.
 \end{proof}

\vspace{0.3cm}

\section{Almost abelian Lie algebras}
In this section, we focus on Lie-Hermitian manifolds where ${\mathfrak g}$ is an {\em almost abelian Lie algebra}, meaning that ${\mathfrak g}$ is non-abelian but contains an abelian ideal ${\mathfrak a}$ of codimension $1$. As mentioned in the introduction, almost abelian Lie algebras are special cases of Lie algebras containing a $J$-invariant abelian ideal of codimension $2$, so the proof of this section is implied by the proof in the next section. However, we include it here for the convenience of the readers, as the two proofs are analogous while the proof here is more transparent. For notations we will follow  \cite{GuoZ}.

Given an almost abelian Lie algebra \(({\mathfrak g}, J, g)\) with an abelian ideal ${\mathfrak a}\subset{\mathfrak g}$ of codimension 1, write ${\mathfrak a}_J := {\mathfrak a} \cap J {\mathfrak a}$. It is a \(J\)-invariant ideal in ${\mathfrak g}$ of codimension 2. Consequently, we can always choose a unitary basis $\{ e_1, \ldots , e_n\}$ of ${\mathfrak g}^{1,0}$, called an {\em admissible frame}, so that ${\mathfrak a}$ is spanned by
$$ {\mathfrak a} = \text{span}\left\{ \sqrt{-1}(e_1 - \overline{e}_1), \, (e_i + \overline{e}_i), \, \sqrt{-1}(e_i - \overline{e}_i); \, 2 \leq i \leq n \right\}. $$
Let $e$ be an admissible frame of ${\mathfrak g}$. Following \cite{GuoZ}, since ${\mathfrak a} $ is abelian we have
$$ [e_i, e_j]= [ e_i, \overline{e}_j] = [e_1-\overline{e}_1, e_i]= 0, \ \ \ \ \forall \ 2\leq i,j\leq n.$$
From this and equation (\ref{CandD2}) we obtain the following relations
\begin{equation*}
C^{\ast}_{ij} = D^j_{\ast i} = D^1_{\ast i} = C^{\ast}_{1i}+\overline{D^i_{\ast 1}} =0, \ \ \ \ \forall \ 2\leq i,j\leq n.
\end{equation*}
Also, since ${\mathfrak a}$ is an ideal, $[e_1+\overline{e}_1, {\mathfrak a} ] \subseteq {\mathfrak a}$, which leads to $C^1_{1i}=0$ and $D^1_{11} = \overline{D^1_{11}}$. Combining these results, we know that the only possibly non-trivial components of $C$ and $D$ are
\begin{equation} \label{CandD-aala}
D^1_{11}=\lambda \in {\mathbb R}, \ \ \ D^1_{i1}=v_i \in {\mathbb C}, \ \ \ D^j_{i1}=A_{ij}, \ \ \ C^{j}_{1i} = - \overline{A_{ji}}, \ \ \ \ \ \ \forall \  2\leq i,j\leq n.
\end{equation}
Rewriting the structure equation (\ref{eq:structure}) in terms of the coframe $\varphi$ dual to $e$, we obtain
\[
\begin{cases}
d\varphi_1 = - \lambda \,\varphi_1\wedge \overline{\varphi}_1 , \\
d\varphi_i = - \overline{v}_i \, \varphi_1\wedge \overline{\varphi}_1 +  \sum_{j=2}^n \overline{A_{ij}} \,(\varphi_1 + \overline{\varphi}_1)\wedge \varphi_j , \ \ \ \forall \ 2\leq i\leq n.
\end{cases}
\]
For the proof of case (iii) of Theorem \ref{thm}, we will utilize the characterization result from Proposition $7$ and Lemma $6$ in \cite{GuoZ}, which pertains to almost abelian Hermitian Lie algebras.

\begin{lemma} [\cite{GuoZ}] \label{propAL} Let ${\mathfrak g}$ be an almost abelian Lie algebra equipped with a Hermitian structure $(J, g)$, and let $e$ be an admissible frame. Then the following hold:
\begin{itemize}
    \item[(i)] ${\mathfrak g}$ is unimodular $\ \Longleftrightarrow \ $ $\lambda + \mbox{tr}(A) + \overline{\mbox{tr}(A)} = 0$;
    \item[(ii)] $g$ is Chern flat $\iff \lambda = 0$, $v = 0$, $[A, A^*] = 0$.
\end{itemize}
Here $A^*$ stands for the conjugate transpose of the matrix $A$.
\end{lemma}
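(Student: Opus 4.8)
The plan is to prove both equivalences by reading off the Chern data directly from the structure constants in (\ref{CandD-aala}), using throughout the decisive feature of an admissible frame: the only possibly nonzero components $D^j_{ik}$ all have last lower index $k=1$, and the only nonzero $C$-components are $C^j_{1i}=-\overline{A_{ji}}$ (together with their antisymmetric partners).

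For (i), I would evaluate the unimodularity criterion $\sum_r(C^r_{ri}+D^r_{ri})=0$ one value of $i$ at a time. For $i\geq 2$ both families of terms vanish for structural reasons: $C^r_{ri}$ would require the pair of lower indices $\{r,i\}$ to contain the index $1$, but $C^1_{1i}=0$ and $C^\ast_{ij}=0$ for $i,j\geq 2$; while $D^r_{ri}$ would require last lower index $i=1$. Hence the criterion is automatically satisfied unless $i=1$. For $i=1$, using $C^r_{r1}=-C^r_{1r}=\overline{A_{rr}}$ for $r\geq 2$, together with $D^1_{11}=\lambda$ and $D^r_{r1}=A_{rr}$ for $r\geq 2$, I obtain $\sum_r(C^r_{r1}+D^r_{r1})=\lambda+\tr(A)+\overline{\tr(A)}$, which gives the stated equivalence.

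For (ii), the key step is a compact matrix computation of the Chern curvature. Since $D^j_{ik}=0$ unless $k=1$, formula (\ref{torsion}) shows that the connection matrix is supported entirely on $\varphi_1$ and $\overline{\varphi}_1$. Encoding the data in the $n\times n$ matrix $P$ with $P_{ji}=D^j_{i1}$, so that $P=\left(\begin{smallmatrix}\lambda & v^T\\ 0 & A^T\end{smallmatrix}\right)$, one finds $\theta=P^{T}\varphi_1-\overline{P}\,\overline{\varphi}_1$. Substituting $d\varphi_1=-\lambda\,\varphi_1\wedge\overline{\varphi}_1$ into $\Theta=d\theta-\theta\wedge\theta$, every entry becomes a multiple of $\varphi_1\wedge\overline{\varphi}_1$ and the curvature matrix collapses to
\[
\Theta=\big([P^{T},\overline{P}]-\lambda(P^{T}+\overline{P})\big)\,\varphi_1\wedge\overline{\varphi}_1 .
\]
Thus $g$ is Chern flat (i.e. $\Theta=0$) if and only if the matrix identity $[P^{T},\overline{P}]=\lambda(P^{T}+\overline{P})$ holds.

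Finally I would unwind this single matrix identity. Its $(1,1)$ entry reads $-|v|^2=2\lambda^2$, and since the two sides have opposite signs this forces $\lambda=0$ and $v=0$ simultaneously. Feeding these back, the off-diagonal blocks of the identity vanish identically and the lower-right $(n-1)\times(n-1)$ block reduces to $AA^{\ast}-A^{\ast}A=0$, i.e. $[A,A^{\ast}]=0$; the converse follows by reversing the steps, since $\lambda=v=0$ and $[A,A^{\ast}]=0$ make $\Theta$ vanish. I expect the only real obstacle to be the bookkeeping in the curvature computation — verifying that no $\varphi_k\wedge\overline{\varphi}_\ell$ with $(k,\ell)\neq(1,1)$ survives and that the commutator emerges cleanly — after which the entire equivalence is delivered by the sign in the scalar equation $-|v|^2=2\lambda^2$.
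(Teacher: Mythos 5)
Your proof is correct. The paper does not actually prove this lemma (it is quoted from \cite{GuoZ}), but your matrix computation $\Theta=\big([P^{T},\overline{P}]-\lambda(P^{T}+\overline{P})\big)\,\varphi_1\wedge\overline{\varphi}_1$ reproduces entry-by-entry the curvature components (\ref{almostcur}) that the paper does display ($R_{1\bar{1}1\bar{1}}=-2\lambda^2-|v|^2$ is your $(1,1)$ entry, $R_{1\bar{1}i\bar{1}}=-(A^{*}v)_i$ the off-diagonal block, etc.), and the remaining steps --- the sign argument $-|v|^2=2\lambda^2\Rightarrow\lambda=v=0$ and the index bookkeeping for $\sum_r(C^r_{ri}+D^r_{ri})=0$ --- constitute the standard derivation, so your route coincides with the intended one.
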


It was calculated in \cite{GuoZ} that under any admissible frame $e$, the only possibly non-zero components of the Chern torsion and curvature are
\begin{eqnarray}
\begin{cases}
T^1_{1i} = v_i, \ \ \ \ \ \ T^j_{1i} = A_{ij} + \overline{A_{ji}}, \\
R_{1\bar{1}1\bar{1}} = -2\lambda^2 - |v|^2,  \ \ \quad R_{1\bar{1}i\bar{1}} = -\sum_{k=2}^n \overline{A_{ki}}\,v_k, \\
R_{1\bar{1}i\bar{j}} = v_i\overline{v_j} + [A, A^*]_{ij} - \lambda(A_{ij} + \overline{A_{ji}}),
\end{cases} \label{almostcur}
\end{eqnarray}
for any $2 \leq i,j \leq n$, where $[A,A^{\ast}]_{ij} = \sum_{k=2}^n \big( A_{ik}\overline{A_{jk} } - \overline{A_{ki}}A_{kj}\big) $. 

\begin{proof}[{\bf Proof of Theorem \ref{thm} for unimodular almost abelian Lie algebras}]
Let $(\mathfrak g, J, g)$ be a unimodular almost abelian Lie algebra with a Hermitian structure. Assume that the Chern connection of $g$ has constant mixed curvature: $\mathcal{C}_{\alpha, \beta}=c$, where $\beta \neq 0$. Let $e$ be an admissible frame of ${\mathfrak g}$. By (\ref{almostcur}) we have
$R_{i\bar{i}*\bar{*}}= R_{i\bar{i}i\bar{i}} = 0 $ for any $ 2 \leq i \leq n $. So when $\mathcal{C}_{\alpha, \beta}=c$, by (\ref{chun1}) for any  $2\leq i\leq n$, we get $ c = 0 $.

By (\ref{chun3}), (\ref{CandD-aala}) and (\ref{almostcur}), we have $R_{1\bar{1}1\bar{1}} = -2\lambda^2 - |v|^2,$ and
\begin{equation}
 R_{1\bar{1}k\bar{k}} = |v_k|^2-\lambda(A_{kk}+\overline{A_{kk}})+
\sum_{r=2}^n(|A_{kr}|^2-|A_{rk}|^2)), \ \ \ \ \forall \ 2\leq k\leq n. \label{chun5}
\end{equation}
Therefore,
$$ R_{1\bar{1}} = R_{1\bar{1}1\bar{1}} +  \sum_{s=2}^n R_{1\bar{1}s\bar{s}} = -2\lambda^2 - |v|^2 + |v|^2 - \lambda (\mbox{tr} (A) +\overline{\mbox{tr} (A)} ) = -\lambda ^2.$$ 
In the last equality we used the fact that $\mbox{tr} (A) +\overline{\mbox{tr} (A)} = -\lambda$ by part (i) of Lemma \ref{propAL} since ${\mathfrak g}$ is unimodular. By (\ref{chun1}) for $i=1$, we get 
\begin{equation}
0 = \beta R_{1\bar{1}1\bar{1}}+\alpha  R_{1\bar{1}} = - (\alpha+2\beta)\lambda^2-\beta |v|^2. \label{v1}
\end{equation}
Similarly, by (\ref{chun2}) for $i=1$ and any $2\leq k\leq n$ we get
$$ 0 = 4\beta \widehat{R}_{1\bar{1}k\bar{k}} + \alpha (R_{1\bar{1}} + R_{k\bar{k}}) = \beta R_{1\bar{1}k\bar{k}} + \alpha R_{1\bar{1}}.$$ 
Summing $k$ from $2$ to $n$ we deduce
\begin{equation} \label{v2}
0= \beta (|v|^2 + \lambda^2) - (n-1) \alpha \lambda^2 . 
\end{equation}
Adding up (\ref{v1}) and (\ref{v2}), we end up with $(n\alpha+\beta)\lambda^2=0$. If  $(n\alpha+\beta)\neq 0$, then we have  $\lambda=0$, and by either (\ref{v1}) or (\ref{v2}) and the assumption that $\beta \neq 0$, we conclude that $v=0$. If on the other hand $(n\alpha+\beta)= 0$, then $\alpha = -\frac{\beta}{n}$ and (\ref{v1}) gives us 
$$ 0 = -\beta \big(  (2-\frac{1}{n})\lambda^2 + |v|^2  \big). $$
Since $\beta\neq 0$, this gives us $\lambda =0$ and $v=0$.  

In order to prove that $g$ is Chern flat, it remains to show that $R_{1\bar{1}i\bar{j}}=0$ for any $2\leq i,j\leq n$. Since $c=0$, $R_{1\bar{1}}=0$, and $R_{i\bar{\ast}\ast \bar{\ast}}=0$, by  (\ref{mix1}) we have
$$ 0 = 4\beta \widehat{R}_{1\bar{1}i\bar{j}} = \beta R_{1\bar{1}i\bar{j}}. $$
Hence $R_{1\bar{1}i\bar{j}}=0$ as $\beta \neq 0$. This completes the proof of Theorem \ref{thm} in the almost abelian case.
\end{proof}

\vspace{0.3cm}

\section{Lie algebras with $J$-invariant abelian ideal of codimension $2$}
In this section, we focus on Lie algebras that contain $J$-invariant abelian ideals of codimension 2. Let ${\mathfrak g}$ be a Lie algebra which contains an abelian ideal ${\mathfrak a} \subseteq {\mathfrak g}$ of codimension $2$. Then ${\mathfrak g}$ is always solvable of step at most 3, but in general it may not be 2-step solvable. The codimension of ${\mathfrak a}_J := {\mathfrak a} \cap J {\mathfrak a}$ is either $2$ or $4$, and it is $2$ when and only when  $ J{\mathfrak a} = {\mathfrak a} $. Throughout this section, we will focus on this simpler case and assume that $J{\mathfrak a} = {\mathfrak a}$ from now on. We will once again follow the notations of \cite{GuoZ}.

A unitary basis $\{ e_1, \ldots , e_n\}$ of ${\mathfrak g}^{1,0}$ will be called an {\em admissible frame} for $({\mathfrak g},J,g)$ if
$$ {\mathfrak a} = \mbox{span} \{ e_i+\overline{e}_i, \, \sqrt{-1}(e_i-\overline{e}_i); \ 2\leq i\leq n\} . $$
Since ${\mathfrak a}$ is abelian and it is an ideal, we have
\begin{equation*}
C^{\ast}_{ij}=D^j_{\ast i} =C^1_{\ast \ast} = D^{i}_{1\ast} =D^{\ast}_{1i} = 0, \ \ \ \ \ \forall \ 2\leq i,j\leq n,
\end{equation*}
hence the only possibly non-zero  components of $C$ and $D$ are
\begin{equation}  \label{CD-XYZ}
C^{j}_{1i}= X_{ij}, \ \ \ D^1_{11}=\lambda, \ \ \ D^j_{i1}=Y_{ij}, \ \ \ D^1_{ij}=Z_{ij}, \ \ \ D^1_{i1}=v_i, \ \ \ \ \ 2\leq i,j\leq n,
\end{equation}
where $\lambda \geq 0$, $v\in {\mathbb C}^{n-1}$ is a column vector and $X$, $Y$, $Z$ are $(n-1)\times (n-1)$ complex matrices. Here and from now on we have rotated the angle of $e_1$ to assume that $D^1_{11}\geq 0$.  Let $\varphi$ be the coframe dual to $e$. For simplicity, we denote the column vector $^t\!(\varphi_2, \ldots , \varphi_n)$ by $\varphi$, then the structure equation takes the form
\begin{equation}
\left\{ \begin{split} &d\varphi_1 \, = \, -\lambda \varphi_1\overline{\varphi}_1 , \hspace{4.4cm} \\
&d\varphi \, = \, - \varphi_1\overline{\varphi}_1 \overline{v} -\varphi_1\,^t\!X \varphi + \overline{\varphi}_1 \overline{Y} \varphi - \varphi_1 \overline{Z} \,\overline{\varphi}. \end{split} \right.  \label{structure}
\end{equation}
Since $d^2\varphi =0$, we get
\begin{equation}
\left\{ \begin{split} \lambda (X^{\ast}\!+Y)+ [X^{\ast} ,Y]  -  Z\overline{Z} \, = \, 0, \\
\lambda Z - ( Z \,^t\!X + Y Z )\, =\, 0. \hspace{1.2cm} \end{split} \right.   \label{XYZ}
\end{equation}
Also, by (\ref{torsion}), we know that the only possibly non-zero components of the Chern torsion are:
\begin{equation} \label{torsion-XYZ}
T^1_{1i}=v_i, \ \ \ \ T^1_{ij}=Z_{ji}-Z_{ij}, \ \ \ \ T^j_{1i}=Y_{ij}-X_{ij}, \ \ \ \ \ 2\leq i,j\leq n.
\end{equation}
From \cite[Proposition 3]{GuoZ}, we have
\begin{lemma} [\cite{GuoZ}]  \label{lemma6}
Let $({\mathfrak g},J,g)$ be a Lie algebra with Hermitian structure and let ${\mathfrak a}\subseteq {\mathfrak g}$ be a $J$-invariant abelian ideal of codimension $2$. Then under any admissible frame,
\begin{enumerate}
\item ${\mathfrak g}$ is unimodular $\Longleftrightarrow \, \lambda - \mbox{tr}(X)+\mbox{tr}(Y)=0$,
\item $g$ is Chern flat  $\Longleftrightarrow \lambda =0$, $v=0$, $Z=0$, $[Y,Y^{\ast}]=0$, and $[Y,X^{\ast}]=0$.
\end{enumerate}
\end{lemma}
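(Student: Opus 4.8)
The plan is to read off both statements directly from the structure constants (\ref{CD-XYZ}), using the unimodularity criterion from the preliminaries for part (1) and the Chern curvature formula (\ref{curvature}) for part (2).

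For part (1), I would substitute (\ref{CD-XYZ}) into the unimodularity criterion $\sum_r(C^r_{ra}+D^r_{ra})=0$, checking it index by index over $1\leq a\leq n$. For $a=i\geq 2$ both sums vanish identically: every nonzero $C$ forces a lower index equal to $1$, and every nonzero $D$ with upper index $\geq 2$ forces a trailing lower index equal to $1$, so neither $C^r_{ri}$ nor $D^r_{ri}$ can be nonzero; these equations are vacuous. The only surviving equation is the one for $a=1$, where the antisymmetry $C^r_{r1}=-C^r_{1r}=-X_{rr}$ gives $\sum_r C^r_{r1}=-\operatorname{tr}(X)$, while $D^1_{11}=\lambda$ and $D^r_{r1}=Y_{rr}$ give $\sum_r D^r_{r1}=\lambda+\operatorname{tr}(Y)$. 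Adding these yields exactly $\lambda-\operatorname{tr}(X)+\operatorname{tr}(Y)=0$, which is the asserted criterion.

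For part (2), the key structural observation is that under a unitary frame the formula (\ref{curvature}) involves only the $D$-constants $(\lambda,v,Y,Z)$ and never $X$; consequently $X$ can enter the Chern-flat characterization only through the integrability relations (\ref{XYZ}). For the forward direction I would extract the conditions in stages. First, (\ref{HD}) gives $R_{1\bar1 1\bar1}=-2\lambda^2-|v|^2$, so $R=0$ forces $\lambda=0$ and $v=0$. With $\lambda=v=0$, a direct pass through (\ref{curvature}) shows that for indices $i,j,k,\ell\geq 2$ only the $s=1$ term in the first sum can survive, leaving $R_{i\bar{j}k\bar{\ell}}=Z_{ki}\overline{Z_{\ell j}}$; setting $j=i$ and $\ell=k$ forces $|Z_{ki}|^2=0$, hence $Z=0$. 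Finally, once $\lambda=v=Z=0$ the only nonzero $D$-constants are $D^{p}_{q1}=Y_{qp}$, and tracking which of the four terms of (\ref{curvature}) can contribute shows that every component vanishes except $R_{1\bar1 k\bar\ell}=[Y,Y^{\ast}]_{k\ell}$, so $R=0$ is equivalent to $[Y,Y^{\ast}]=0$. At this point $[Y,X^{\ast}]=0$ is \emph{not} a further curvature condition: it follows from the first relation in (\ref{XYZ}), which reduces to $[X^{\ast},Y]=0$ once $\lambda=0$ and $Z=0$. The converse is this same computation read backwards: substituting $\lambda=v=Z=0$ and $[Y,Y^{\ast}]=0$ into (\ref{curvature}) makes all components vanish.

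The hard part will be the bookkeeping inside (\ref{curvature}): one must track the vanishing patterns of $D$ (nonzero only when the last lower index is $1$ or the upper index is $1$) to decide, for each placement of the four indices among $\{1\}$ and $\{2,\dots,n\}$, which of the four terms can contribute. The one genuinely conceptual point—rather than routine calculation—is recognizing that the condition $[Y,X^{\ast}]=0$ cannot be produced by $R=0$ at all, since the Chern curvature never sees $X$, and must instead be supplied by the Jacobi identity (\ref{XYZ}).
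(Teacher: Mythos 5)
Your proof is correct. The paper itself gives no argument for this lemma — it is quoted verbatim from \cite[Proposition 3]{GuoZ}, where it is established by exactly the kind of direct computation with the structure constants (\ref{CD-XYZ}), the unimodularity criterion, and the curvature formula (\ref{curvature}) that you carry out; your key observation that $[Y,X^{\ast}]=0$ is not a curvature condition but comes from the Jacobi identity (\ref{XYZ}) once $\lambda=Z=0$ is precisely the right way to account for the appearance of $X$ in the statement.
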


Let $e$ be an admissible frame. By $(\ref{curvature}), \ (\ref{HD}),\,(\ref{chun3}),\,(\ref{RhatD})$, $(\ref{CD-XYZ})$, a straight-forward computation in \cite[formula (16)]{LZ25} leads to the following
\begin{eqnarray}
\hspace{2em}
\begin{cases}
R_{1\bar{1}1\bar{1}} = -2\lambda^2 - |v|^2, \quad {R}_{i\bar{i}i\bar{i}} = |Z_{ii}|^2, \quad \widehat{R}_{i\bar{i}k\bar{k}} = \dfrac{1}{4} |Z_{ik} + Z_{ki}|^2, \quad \forall \ 2 \leq i \neq k \leq n, \\
\widehat{R}_{1\bar{1}i\bar{j}} = \dfrac{1}{4} \big(v v^* + [Y, Y^*] - \lambda (Y^* + Y) -Z\overline{Z}- \,^t\!ZZ^* - \,^t\!Z\overline{Z}\big)_{ij}, \quad \forall \ 2 \leq i,j \leq n, \\
\sum_{i=2}^n\widehat{R}_{1\bar{1}i\bar{i}} = \dfrac{1}{4} \big(|v|^2 - \lambda \text{tr}(Y + Y^*) - 2 \text{tr}(Z\overline{Z}) - |Z|^2\big), 
\end{cases} \hspace{-2em}\label{cur-XYZ1}
\end{eqnarray}
where the last line is obtained by taking trace in the previous line. Similarly, by (\ref{curvature}) and (\ref{CD-XYZ}) we have
\begin{equation} \label{cur-XYZ1b}
R_{1\bar{1}}=\sum_{s=1}^n R_{1\bar{1}s\bar{s}}=-2\lambda^2 - \lambda \, \text{tr}(Y+Y^*), \quad \quad R_{i\bar{j}}= \sum_{s=1}^n R_{i\bar{j}s\bar{s}}=0,\quad  \ \forall \ 2 \leq i,j \leq n.
\end{equation}

Since ${\mathfrak g}$ is unimodular, by part (i) of Lemma \ref{lemma6} we have $\text{tr}(Y-X) = -\lambda$. Also, by taking trace in the first line of (\ref{XYZ}) we obtain
\begin{eqnarray}
\text{tr}(Z\overline{Z}) = \lambda \,\text{tr}(X^* + Y) = \lambda \,\text{tr}(Y^* + Y) + \lambda^2. \label{unimodular-XY}
\end{eqnarray}

\begin{proof}[{\bf Proof of Theorem \ref{thm} for Lie algebras with $J$-invariant abelian ideal of codimension $2$}]
Let $(\mathfrak g, J, g)$ be a unimodular Hermitian Lie algebra which contains an abelian ideal ${\mathfrak a}$ of codimension $2$ with $J{\mathfrak a}={\mathfrak a}$. Assume that the Chern connection of $g$ has constant mixed curvature $\mathcal{C}_{\alpha, \beta}=c$, where $\beta \neq 0$. By (\ref{mix1}), (\ref{chun1}), (\ref{cur-XYZ1}), and (\ref{unimodular-XY}), we have
\begin{align}
c &= \beta R_{1\bar{1}1\bar{1}} + \alpha \sum_{s=1}^n R_{1\bar{1}s\bar{s}} = -(\alpha + 2\beta)\lambda^2 - \beta \,|v|^2 - \alpha \,\text{tr}(Z\overline{Z}), \label{R1} \\
c &= \beta R_{i\bar{i}i\bar{i}} + \alpha \sum_{s=1}^n R_{i\bar{i}s\bar{s}} = \beta \,|Z_{ii}|^2, \quad \forall \ 2 \leq i \leq n, \label{R2} \\
2c &= 4\beta \widehat{R}_{i\bar{i}k\bar{k}} + \alpha  \big( R_{k\bar{k}} + R_{i\bar{i}}  \big) = \beta \,|Z_{ik} + Z_{ki}|^2, \quad \ \forall \ 2 \leq i\neq  k \leq n, \label{R3} \\
2c &= 4\beta \widehat{R}_{1\bar{1}i\bar{i}} + \alpha  \big( R_{1\bar{1}} + R_{i\bar{i}} \big) =  \alpha \big( -\lambda^2 - \text{tr}(Z\overline{Z}) \big) \,+  \label{R4} \\
& \ \ \ +\beta  \big( v v^* + [Y, Y^*] - \lambda (Y^* + Y) - Z\overline{Z} - \,^t\! ZZ^* - \,^t\! Z\overline{Z} \big)_{ii} , \ \ \ \forall \ 2\leq i\leq n.  \nonumber  
\end{align}
Note that (\ref{R2}) and (\ref{R3}) give us
$$ 2c(1+\delta_{ik}) = \beta \, |Z_{ik} + Z_{ki}|^2, \quad \ \forall \ 2 \leq i, k \leq n. $$
Summing $i$ and $k$ from $2$ to $n$, we get
\begin{equation} \label{eqA}
 c \,n(n-1) = \frac{1}{2} \beta \,|\,^t\!Z+Z|^2 = \beta (|Z|^2 +  \mbox{tr}(Z\overline{Z})). 
 \end{equation}
On the other hand, by summing $i$ from $2$ to $n$ in (\ref{R4}) and utilizing (\ref{unimodular-XY}), we get
\begin{equation} \label{eqB}
2c(n - 1) = \beta|v|^2 + (\beta-(n-1)\alpha)\lambda^2 -\beta (|Z|^2 +  \mbox{tr}(Z\overline{Z})) - (2\beta +\alpha (n-1)) \text{tr}(Z\overline{Z}). 
\end{equation}

If we perform a unitary change on $\{ e_2, \ldots , e_n\}$, say $\tilde{e}_i=\sum_{j=2}^n A_{ij}e_j$ for $2\leq i\leq n$,  while keep $\tilde{e}_1=e_1$, then by the definition of the structure constants (\ref{CandD}), we have
$$\widetilde{Z}_{ij} = \widetilde{D}^1_{ij} =\langle \tilde{e}_i, [\overline{\tilde{e}}_1,\tilde{e}_j] \rangle = A_{ik} D^1_{k\ell} A_{j\ell} = (AZ\,^t\!\!A)_{ij}.$$
That is, $\widetilde{Z}=AZ\,^t\!\!A$, hence $(\widetilde{Z}+\,^t\!\widetilde{Z}) = A(Z+\,^t\!Z)\,^t\!\!A$. As it is well-known, for any given symmetric matrix $P$, there always exists a unitary matrix $A$ such that $AP\,^t\!\!A$ is diagonal. So by performing a unitary change of $\{ e_2, \ldots , e_n\}$ if necessary, we may assume that the matrix $Z+\,^t\!Z$ is diagonal. Note that $e$ remains to be admissible. 

Now suppose that $n\geq 3$. Then by (\ref{R3}), we get $c=0$. By (\ref{R2}) and the fact that $\beta\neq 0$, we get $Z_{ii}=0$ for each $2\leq i\leq n$, hence $Z$ is skew-symmetric. This gives us $\mbox{tr}(Z\overline{Z})=-|Z|^2$, while (\ref{R1}) and (\ref{eqB}) become 
\begin{eqnarray}
&& (\alpha +2\beta)\lambda^2 + \beta |v|^2 - \alpha |Z|^2 \ = \ 0,  \label{eqC} \\
&& (\beta - (n-1)\alpha)\lambda^2 + \beta |v|^2 + (2\beta +(n-1) \alpha) |Z|^2 \ = \ 0.  \label{eqD} 
\end{eqnarray}
Let us divide the discussion into the following three cases:

{\em Case 1:}  $\frac{\alpha}{\beta} = - \frac{4}{2n+1}$. \ In this case after we cancel the factor $\beta\neq 0$ the equation (\ref{eqC}) becomes 
$$ 2(2n-1)\lambda^2 + (2n+1)|v|^2 +4|Z|^2 =0. $$
Therefore we obtain $\lambda =0$, $v=0$, and $Z=0$. 

{\em Case 2:}  $\frac{\alpha}{\beta} < - \frac{4}{2n+1}$. \ We can multiply, respectively, (\ref{eqC}) and (\ref{eqD}) by $(n-1)\alpha -\beta$ and $\alpha +2\beta$. Summing the results, we get
$$ (n\frac{\alpha}{\beta}+1)|v|^2 +((2n+1)\frac{\alpha}{\beta} + 4) |Z|^2 =0. $$
The coefficients of both terms on the left hand side are negative, thus $v=0$ and $Z=0$. Plug into (\ref{eqD}), since
$$ 1-(n-1)\frac{\alpha}{\beta} \,>\, 1+(n-1)\frac{4}{2n+1} \ = \ \frac{6n-3}{2n+1} \, >\, 0, $$
we conclude  that $\lambda =0$ as well.

{\em Case 3:}  $\frac{\alpha}{\beta} > - \frac{4}{2n+1}$. \ We can multiply, respectively, (\ref{eqC}) and (\ref{eqD}) by $(n-1)\alpha +2\beta$ and $\alpha $. Summing the results, we get
$$ ((2n+1)\frac{\alpha}{\beta} + 4) \lambda^2 + (n\frac{\alpha}{\beta} +2)|v|^2 =0. $$
In this case the coefficients of both terms on the left  hand side are positive, thus $\lambda =0$ and $v=0$. Plug into (\ref{eqD}) again, since
$$ 2+(n-1)\frac{\alpha}{\beta} \,> \,2-(n-1)\frac{4}{2n+1} \,=\, \frac{6}{2n+1} >0, $$
we conclude  that $Z=0$ as well.

In summary, when $n\geq 3$, we always have $c=0$, $\lambda =0$, $v=0$, and $Z=0$. Plug these into (\ref{R4}) and the first line of (\ref{XYZ}), we obtain $[Y,Y^{\ast}]=0$ and $[X^{\ast}, Y]=0$, respectively. By part (ii) of Lemma \ref{lemma6}, we know that the metric $g$ is Chern flat. 

It remains to discuss the $n=2$ case. In this case $v$, $X$, $Y$, $Z$ are all complex numbers. Since ${\mathfrak g}$ is unimodular, we have $X=\lambda +Y$, and (\ref{XYZ}) now becomes 
\begin{equation} \label{2Da}
 \lambda^2 + \lambda (Y+\overline{Y})=|Z|^2, \ \ \ \ \ \ \ \ YZ=0. 
 \end{equation}
Also, equations (\ref{R1}), (\ref{R2}) and (\ref{eqB}) now tell us that $c=\beta |Z|^2$ and 
\begin{eqnarray} 
 && (2\beta +\alpha )\lambda^2 + \beta |v|^2 +(\beta + \alpha ) |Z|^2 \ = \ 0, \label{2Db} \\
 && (\beta -\alpha )\lambda^2 + \beta |v|^2 -(6\beta +\alpha ) |Z|^2 \ = \ 0. \label{2Dc}
 \end{eqnarray}
The second equation of (\ref{2Da}) says that either $Y=0$ or $Z=0$, so let us again divide our discussion into the following two cases:

{\em Case 1:} If $Z=0$, \ then $c=0$, and equations (\ref{2Db}) and (\ref{2Dc}) gives 
$$ (2\beta +\alpha) \lambda^2 +\beta |v|^2 =0, \ \ \ \  (\beta -\alpha) \lambda^2 +\beta |v|^2 =0. $$
If $\beta +2\alpha \neq 0$, then these two linear equations are independent, thus $\lambda =v=0$. If $\beta +2\alpha = 0$, then $\alpha = -\frac{\beta}{2}$ and the first equation above becomes $\frac{3}{2}\lambda^2+|v|^2=0$ after we cancel the factor $\beta \neq 0$. So again we will have $\lambda =v=0$. By part (ii) of Lemma \ref{lemma6}, we know that the metric $g$ is Chern flat. 

{\em Case 2:} If $Y=0$, \ then we have $\lambda^2=|Z|^2$, so (\ref{2Db}) and (\ref{2Dc}) become
$$ (3\beta +2\alpha) \lambda^2 +\beta |v|^2 =0, \ \ \ \  -(5\beta +2\alpha) \lambda^2 +\beta |v|^2 =0. $$
If $2\beta + \alpha \neq 0$, then the above two equations are linearly independent, thus $\lambda = v=0$, and we get $g$ Chern flat as before. In the following, let us assume that $\alpha = -2\beta$. The above equations only give us $\lambda^2 =|v|^2$ instead. Without loss of generality, let us assume that $\beta =1$. We have $\alpha =-2$, $c=\lambda^2=|v|^2=|Z|^2$, and we are in the ${\mathcal C}_{-2,1}=c$ situation. We claim that $c$ must be zero in this case. To see this, let us apply formula (\ref{mix1}) in Lemma \ref{lemma1} to the case $i=j=k=1$ and $\ell =2$:
$$ 4\beta \widehat{R}_{1\bar{1}1\bar{2}} + \alpha (2R_{1\bar{2}}) = 0 , $$
or equivalently, after plug in $\alpha =-2\beta$ and cancel the factor  $\beta \neq 0$,
\begin{equation} \label{eqE}
 (R_{1\bar{1}1\bar{2}} +  R_{1\bar{2}1\bar{1}}) - 2R_{1\bar{2}} \ = \ R_{1\bar{1}1\bar{2}} - R_{1\bar{2}1\bar{1}} - 2R_{1\bar{2}2\bar{2}} \ = \ 0 . 
 \end{equation}
Now since the only possibly non-zero $D$ components are $D^1_{11}=\lambda$, $D^1_{21}=v$ and $D^1_{22}=Z$, by (\ref{curvature}) we have 
$$ R_{1\bar{1}1\bar{2}} = R_{1\bar{2}1\bar{1}} = -v\overline{Z}, \ \ \  R_{1\bar{2}2\bar{2}} = v\overline{Z}. $$
Plug into (\ref{eqE}), we get $-2v\overline{Z}=0$. Since $c=\lambda^2=|v|^2=|Z|^2$, we conclude that we must have $c=\lambda =v=Z=0$, therefore $g$ is Chern flat. This completes the case (iv) of Theorem \ref{thm}. 
\end{proof}

\vspace{0.3cm}

\section{Bismut torsion-parallel manifolds}
For $n\geq 3$, Bismut torsion-parallel (BTP) manifolds can be balanced (and non-K\"ahler). The class of balanced BTP manifolds is highly restrictive yet intriguing. In the $n=3$ case, a recent study by Zhao-Zheng \cite{ZhaoZ25} provided a classification of all balanced BTP threefolds, allowing us to verify Conjecture \ref{conj2}  on a case-by-case basis. Below,  we will first recall the classification result for balanced BTP threefolds.

Let $(M^3,g)$ be a  balanced, non-K\"ahler compact BTP Hermitian threefold. As shown in \cite{ZhaoZ22}, for any point $p\in M$, there always exists a unitary frame $e$ (henceforth referred to as {\em special frames}) around $p$ such that the only possibly non-zero Chern torsion components are $a_i=T^i_{jk}$, where $(ijk)$ is a cyclic permutation of $(123)$. Furthermore, each $a_i$ is a global constant on $M^3$, with $a_1=\cdots =a_r>0$, $a_{r+1}=\cdots =0$, where $r\in \{ 1,2,3\}$ is the rank of the {\em Streets-Tian tensor} $B$, which is the global $2$-tensor on any Hermitian manifold defined  by
$ B_{i\bar{j}} = \sum_{k,\ell } T^j_{k\ell} \overline{   T^i_{k\ell }  }$ under any unitary frame. Note that the tensor $B$ was first introduced and studied by Streets and Tian in \cite[Formula (4)]{ST11}, where it was denoted as $Q^2$. According to \cite{ZhaoZ22, ZhaoZ25}, any compact balanced (but non-K\"ahler) BTP threefold must be one of the following:
\begin{itemize}
\item $r=3$, $(M^3,g)$ is a compact quotient of the complex simple Lie group $SO(3,{\mathbb C})$, in particular it is Chern flat;

\item  $r=1$, $(M^3,g)$ is the so-called {\em Wallach threefold}, where $M^3$ is biholomorphic to the flag variety ${\mathbb P}(T_{{\mathbb P}^2} )$ while $g$ is the K\"ahler-Einstein metric minus the square of the null-correlation section. 

\item $r=2$, in this case $(M^3,g)$ is said to be of {\em middle type}. With an appropriate scaling of the metric, the Bismut curvature matrix under $e$ becomes
\begin{equation} \label{eq:middletype}
\Theta^b = \left[ \begin{array}{ccc} d\alpha & d\beta_0 & \\ - d\beta_0 & d\alpha & \\ & & 0 \end{array} \right], \ \ \ \ \ \ \ \ \ \left\{ \begin{array}{ll}  d\alpha = \ x(\varphi_{1\bar{1}}+\varphi_{2\bar{2}}) +iy(\varphi_{2\bar{1}}- \varphi_{1\bar{2}}), & \\ d\beta_0 = -iy (\varphi_{1\bar{1}}+\varphi_{2\bar{2}}) + (x\!-\!2)(\varphi_{2\bar{1}}-\varphi_{1\bar{2}}), & \end{array} \right.
\end{equation}
where $x,y$ are real-valued local smooth functions.
\end{itemize}

\begin{proof}[{\bf Proof of Theorem \ref{thm2}}]
Let $(M^3,g)$ be a compact balanced BTP Hermitian threefold. Assume that $g$ is not K\"ahler. Suppose that the Chern connection $\nabla$ of $g$ has constant mixed curvature $c$. Under a special frame $e$, the only possibly non-zero components of Chern torsion are $T^1_{23}=a_1$, $T^2_{31}=a_2$, and $T^3_{12}=a_3$. By (\ref{mix1}), (\ref{eq:Rbhat}) and Lemma \ref{lemma2}, we have
\begin{eqnarray}  
& &  4\beta R^b_{i\bar{j}k\bar{\ell}}+
\alpha \sum_{s=1}^3 \big(R_{i\bar{j}s\bar{s}}\delta_{k{l}}+R_{k\bar{j}s\bar{s}}\delta_{i{l}}+
R_{i\bar{l}s\bar{s}}\delta_{k{j}}+R_{k\bar{l}s\bar{s}}\delta_{i{j}}\big) \label{eq:BTP}\\
&=& \beta \sum_{s=1}^3\big\{\!-2 T^s_{ik} \overline{   T^s_{j\ell}} -3 T^j_{is} \overline{   T^k_{\ell s}} - 3 T^l_{ks} \overline{   T^i_{js } } + T^l_{is} \overline{   T^k_{js } } + T^j_{ks} \overline{   T^i_{\ell s} }\big\} +2c\big( \delta_{i j} \delta_{k \ell} + \delta_{i \ell } \delta_{k j} \big) , \nonumber
\end{eqnarray}
for any $1\leq i,j,k,\ell \leq 3$.
Using the fact that $T^i_{i\ast }=0$ under a special frame, by letting $i=j$ and $k=\ell$ in the above  we obtain
\begin{eqnarray} \label{eq:Hr1}
&& \beta R^b_{i\bar{i}i\bar{i}}+\alpha R_{i\bar{i}}=c,  \label{eq:Hr1} \\
&&  4\beta R^b_{i\bar{i}k\bar{k}}+\alpha \big(R_{i\bar{i}}+R_{k\bar{k}}\big) = 2c+\beta\big(-2a_j^2 +a_k^2+ a_i^2\big), \ \ \ \mbox{if} \ \{ i,j,k\} =\{ 1,2,3\}. \label{eq:Hr1a}
\end{eqnarray}
We will divide the proof into cases according to the rank $r$ of the Streets-Tian tensor $B$. In the case in which $r=3$, $g$ is Chern flat. Thus, the result holds. So we just need to consider the $r=1$ and $r=2$ cases.

\vspace{0.1cm}

{\em Case 1:} When the rank of the Streets-Tian tensor $B$ is $r=1$. \ In this case $(M^3,g)$ is the Wallach threefold described in \cite{ZhaoZ22, ZhaoZ25}. It is a compact homogeneous Hermitian threefold. By (8.15) - (8.19) in \cite{ZhaoZ22} (note that there was a typo in (8.19)), we know that for any given point $p\in M$ there exists a local holomorphic coordinate $\{ z_1, z_2, z_3\}$ such that $p=(0,0,0)$, and at $p$ it holds that  $g_{i\bar{j}}(0)=\delta_{ij}$ and  under the natural frame the components of the Chern curvature tensor $R$  are 
\begin{eqnarray*}
&& R_{i\bar{j}k\bar{\ell}} = 0 ,\ \ \ \ \ \mbox{if} \ \ \{i,k\} \neq \{ j,\ell\}; \\
&& R_{1\bar{1}1\bar{1}} = R_{2\bar{2}2\bar{2}} = R_{3\bar{3}3\bar{3}} = 2, \ \ \ \ \ R_{2\bar{2}1\bar{1}} = R_{2\bar{2}3\bar{3}}  = 1; \\
&& R_{1\bar{2}2\bar{1}} =R_{3\bar{2}2\bar{3}} =1, \ \ \ R_{1\bar{3}3\bar{1}} =-1;  \\
&& R_{1\bar{1}3\bar{3}} = R_{3\bar{3}1\bar{1}}  = R_{1\bar{1}2\bar{2}}  = R_{3\bar{3}2\bar{2}}  =0.
\end{eqnarray*}
From this we see that the at the origin, the components of first Chern Ricci curvature are
$$ R_{1\bar{1}} = R_{3\bar{3}}  = 2, \ \ \ R_{2\bar{2}}=4, \ \ \ R_{i\bar{j}}=0 \ \ \ \mbox{if}  \ i\neq j. $$
For any given type $(1,0)$ tangent vector $X=\sum_{i=1}^3X_i \frac{\partial}{\partial z_i}$ at $p$, we compute
\begin{eqnarray*}
R_{X\overline{X}X\overline{X}} & = & \sum_{i,j,k,\ell }X_i \overline{X}_j  X_k \overline{X}_{\ell} R_{i\bar{j}k\bar{\ell}} \, = \, \sum_i |X_i|^4R_{i\bar{i}i\bar{i}} + \sum_{i\neq k} |X_iX_k|^2 \big(R_{i\bar{i}k\bar{k}} + R_{i\bar{k}k\bar{i}}  \big)  \nonumber \\
& = & 2\sum_i |X_i|^4 + \sum_{i<k} |X_iX_k|^2 \big( R_{i\bar{i}k\bar{k}} + R_{k\bar{k}i\bar{i}} + R_{i\bar{k}k\bar{i}} + R_{k\bar{i}i\bar{k}}  \big) \nonumber \\
& = & 2\sum_i |X_i|^4 +3|X_1X_2|^2 + 3|X_2X_3|^2 - 2 |X_1X_3|^2  \nonumber \\
& = & 2(t^2-2s +|X_2|^4) + 3t|X_2|^2 -2s  \nonumber \\
& = & 2 |X_2|^4 + 3t |X_2|^2 + (2t^2 -6s).  
\end{eqnarray*}
Here for convenience we have denoted by $t=|X_1|^2+|X_3|^2$ and $s=|X_1X_3|^2$. Similarly, 
\begin{eqnarray*}
|X|^2 R_{X\overline{X}} & = & |X|^2 \sum_{i}|X_i|^2 R_{i\bar{i}} \ = \ |X|^2  \big( 2|X_1|^2+4|X_2|^2 + 2|X_3|^2)  \nonumber \\
& = & (t+ |X_2|^2)\, ( 2t+ 4|X_2|^2) \  = \  4 |X_2|^4 + 6t |X_2|^2 + 2t^2 .  \label{valueRic}
\end{eqnarray*}
By the definition of the mixed curvature ${\mathcal C}_{\alpha, \beta}$, at $p$ we have
\begin{eqnarray*}
&& {\mathcal C}_{\alpha ,\beta}(X)-c|X|^4 \ = \  \alpha |X|^2 R_{X\overline{X}} + \beta R_{X\overline{X}X\overline{X}} -c |X|^4 \ = \ \nonumber \\
&& = \ \{ 2(2\alpha +\beta) -c\} |X_2|^4 + \{ 3t(2\alpha +\beta) -2ct\} |X_2|^2 + \{ 2(\alpha +\beta)t^2-6\beta s-ct^2\}.
\end{eqnarray*}
Therefore, if ${\mathcal C}_{\alpha ,\beta}=c$ is a constant, then since $|X_2|^2\geq 0$  is arbitrary, the three coefficients must be identically zero:
$$ 2(2\alpha +\beta)-c=0, \ \ \ \ \ t\{ 3(2\alpha +\beta)-2c\} =0, \ \ \ \ \ t^2\{ 2(\alpha +\beta)-c\} -6\beta s =0. $$
Let $t$ take a positive value, then the first two equalities imply that $c=0$ and $2\alpha +\beta=0$, so the third equality becomes
$$ \beta (t^2-6s) = 0. $$
Take $X_1=1$ and $X_3=0$ for instance, then $t=1$ and $s=0$, so the above gives us $\beta =0$, hence $\alpha =0$. This computation shows the following:

\vspace{0.1cm}

{\em For the Wallach threefold $(M^3,g)$, the mixed curvature ${\mathcal C}_{\alpha, \beta}$ can never be a constant for any $(\alpha ,\beta )\neq (0,0)$. }

\vspace{0.2cm}

{\em Case 2:} When the rank of the Streets-Tian tensor $B$ is $r=2$. \ In this case $(M^3,g)$ is of the middle type, where the Bismut curvature under a special frame is given by (\ref{eq:middletype}). We have
$$ R^b_{\ast \bar{\ast} 3\bar{3}} =0, \ \ \ R^b_{1\bar{1}1\bar{1}}=R^b_{1\bar{1}2\bar{2}}=
R^b_{2\bar{2}1\bar{1}}=R^b_{2\bar{2}2\bar{2}}=x. $$
Note that on any balanced Hermitian manifold, the first Chern Ricci tensor and the first Bismut Ricci tensor coincide, namely,  $R_{i\bar{j}}= R^b_{i\bar{j}}$. In our case we have $\mbox{tr}(\Theta^b)=2\,d\alpha =2 \Theta^b_{11}$, so 
$$ R_{i\bar{j}}= R^b_{i\bar{j}} = 2R^b_{i\bar{j}1\bar{1}}.$$ By (\ref{eq:Hr1}) and (\ref{eq:Hr1a}) we have 
\begin{equation*}
\left\{ \begin{array}{ll} \beta R^b_{1\bar{1}1\bar{1}}+2\alpha R^b_{1\bar{1}1\bar{1}}=c, & \\
\\
\beta R^b_{3\bar{3}3\bar{3}}+2\alpha R^b_{3\bar{3}1\bar{1}}=c, & \\
\\
 4\beta R^b_{2\bar{2}1\bar{1}}+2\alpha \big(R^b_{2\bar{2}1\bar{1}}+R^b_{1\bar{1}1\bar{1}}\big) = 2c+2\beta a_1^2,  & \\
\end{array}\right.
\end{equation*}
The middle line says that $c=0$, while the other two give us
$$ (2\alpha +\beta )x=0, \ \ \ \ \ (2\beta + 2\alpha)x = \beta a_1^2. $$
When $\beta \neq 0$, the second equation above tells us that $x\neq 0$, thus by the first equation we conclude that $2\alpha + \beta =0$. Thus we have shown that

\vspace{0.1cm}

{\em For any balanced BTP threefold of middle type, the mixed curvature ${\mathcal C}_{\alpha ,\beta}$ can never be a non-zero constant. It also cannot be identically zero when $\beta \neq 0$ and $2\alpha +\beta \neq 0$. } 

\vspace{0.1cm}

Note that for $\beta =0$, the mixed curvature is just the first Chern Ricci, given by 
$$ 2d\alpha  = 2x(\varphi_{1\bar{1}}+\varphi_{2\bar{2}}) +2iy(\varphi_{2\bar{1}}- \varphi_{1\bar{2}}).$$
Hence ${\mathcal C}_{1,0}=0$ identically if and only if $x=y=0$  everywhere on $M^3$. In this case $d\alpha =0$ and $d\beta_0=2\varphi_{1\bar{2}} -2\varphi_{2\bar{1}}$. This case can occur. 

Next let us assume that $\beta \neq 0$ but $2\alpha + \beta =0$. Without loss of generality let us assume that $\alpha =1$ and $\beta =-2$. We claim that the mixed curvature ${\mathcal C}_{1,-2}$ cannot be identically zero. Assume otherwise, namely, ${\mathcal C}_{1,-2}\equiv 0$. Then from the equation $(2\beta + 2\alpha)x = \beta a_1^2$ above we conclude that $x=a_1^2>0$. 

On the other hand, from (\ref{eq:middletype}) we get $R^b_{2\bar{1}1\bar{2}}=iy$ where $y$ is a local smooth function which takes real values. Take $i=\ell =2$ and $j=k=1$ in (\ref{eq:BTP}), we get
$$ 4\beta (iy) + \alpha (R_{1\bar{1}} + R_{2\bar{2}}) = \beta \sum_s (-3|T^2_{1s}|^2 -3 |T^1_{2s}|^2). $$
Since $R_{1\bar{1}}=R^b_{1\bar{1}}=2R^b_{1\bar{1}1\bar{1}}=2x$ and similarly $R_{2\bar{2}}=2x$, by plugging $\alpha =1$ and $\beta =-2$ in the above equation we obtain
$$ -8iy + 4x = 12a_1^2. $$
Taking the real part, we get $x=3a_1^2$. Comparing this with the equality $x=a_1^2$ that we obtained before, we get $a_1^2=0$, which is a contradiction.  This shows that the assumption ${\mathcal C}_{1,-2}\equiv 0$ cannot hold for any balanced BTP threefold of middle type, and we have completed the proof of Theorem \ref{thm2}.
\end{proof}

Finally let us consider the class of {\em non-balanced} BTP manifolds. Such manifolds were studied in \cite{ZhaoZ24}. Let $(M^n,g)$ be a compact, non-balanced BTP manifold. Given any  point $p\in M$, there always exists the so-called {\em admissible frame} near $p$, which is a local unitary frame $e$  satisfying  
$$ \eta =\lambda \varphi_n, \ \ \ T^n_{ij }=0, \ \ \ T^j_{in} = \delta_{ij}a_i, \ \ \ \ \ \forall \ 1\leq i,j\leq n,$$
where $\eta$ is Gauduchon's torsion $1$-form defined by $\partial (\omega^{n-1})=-\eta \wedge \omega^{n-1}$ with $\omega$ the K\"ahler form of $g$, $\varphi$ is the coframe dual to $e$, and $T^j_{ik}$ are the Chern torsion components under $e$. Here $\lambda$, $a_1, \ldots , a_n$ are globally defined constants on $M^n$, satisfying
$$ \lambda >0, \ \ \ a_n=0, \ \ \ a_1+ \cdots +  a_{n-1}=\lambda. $$
The readers are referred to Definition 1.6 and Proposition 1.7 of \cite{ZhaoZ24} for more details. As a result, the direction $e_n$ is fixed by the Bismut connection, namely, $\nabla^be_n=0$, which implies that $R^b_{\ast \bar{\ast} \ast \bar{n}}=0$. Now let us prove Theorem \ref{thm3}, which states that any compact, non-balanced BTP manifold cannot have non-zero constant mixed curvature. 

\begin{proof}[{\bf Proof of Theorem \ref{thm3}.}]  Let $(M^n,g)$ be a compact, non-balanced BTP manifold. Assume that the mixed curvature is equal to a non-zero constant: ${\mathcal C}_{\alpha,\beta} =c$, $c\neq 0$. We want to derive at a contradiction.  Fix any $p\in M$, and let $e$ be an admissible frame in a neighborhood of $p$. From our previous discussion, we have $R^b_{\ast \bar{\ast} \ast \bar{n}}=0$. By letting $i=j=k=\ell$ in the formula (\ref{eq:BTP}) at the beginning of this section, we have
$$ 4\beta R^b_{i\bar{i}i\bar{i}} + 4\alpha R_{i\bar{i}} = - 4\beta \sum_{s=1}^n |T^i_{is}|^2 + 4c, \ \ \ \ \ \forall \ 1\leq i\leq n. $$
Apply it to the case $i=n$, and using the property $T^n_{\ast \ast }=0$, we get $\alpha R_{n\bar{n}} = c\neq 0$. 

On the other hand, by taking $i=j=n$ and $k=\ell$ in the first equality in Lemma \ref{lemma2}, we get
\begin{equation}  \label{eq:last}
 R^b_{n\bar{n}k\bar{k}} - R_{n\bar{n}k\bar{k}} = \sum_{s=1}^n |T^k_{ns}|^2 - \sum_{s=1}^n |T^s_{nk}|^2. 
 \end{equation}
Here we used the fact $\nabla^bT=0$ and $T^n_{\ast \ast}=0$ for BTP metrics under admissible frames. Since $R^b_{i\bar{j}k\bar{\ell}}= R^b_{k\bar{\ell}i\bar{j}}$ and $R^b_{i\bar{j}k\bar{n}}=0$, we get $R^b_{n\bar{n}k\bar{k}}=0$. Summing up $k$ from $1$ to $n$ in (\ref{eq:last}), the two terms on the right hand side cancel each other, and we end up with $R_{n\bar{n}}=0$. This of course contradicts to the conclusion $\alpha R_{n\bar{n}} = c\neq 0$ that we obtained before. It shows that for any compact, non-balanced BTP manifold, the mixed curvature ${\mathcal C}_{\alpha , \beta}$ could never be identically equal to a non-zero constant, and Theorem \ref{thm3} is proved.
\end{proof}

\vspace{0.3cm}

\vs

\noindent\textbf{Acknowledgments.} We would like to thank Haojie Chen, Lei Ni, Xiaolan Nie, Kai Tang, Bo Yang, Yashan Zhang, and Quanting Zhao for their interest and helpful discussions. We would also like to thank the referee who made a number of valuable suggestions and corrected dozens of typo and grammar mistakes for us, to which we are very grateful. 

\vs

\end{document}